\newcommand{\C}{\mathbb{C}}
\newcommand{\R}{\mathbb{R}}
\newcommand{\M}{\mathcal{M}}
\newcommand{\J}{\mathcal{J}}
\newcommand{\D}{\mathbb{D}}
\newcommand{\F}{\mathbb{F}}
\newcommand{\Q}{\mathbb{Q}}
\newcommand{\A}{\mathcal{A}}
\newcommand{\Z}{\mathbb{Z}}
\newcommand{\e}{\varepsilon}
\theoremstyle{plain}
\newtheorem{theorem}{Theorem}[section]
\newtheorem{lemma}[theorem]{Lemma}
\newtheorem{proposition}[theorem]{Proposition}
\newtheorem{corollary}[theorem]{Corollary}
\theoremstyle{definition}
\newtheorem{definition}[theorem]{Definition}
\theoremstyle{remark}
\newtheorem{remark}[theorem]{Remark}
\numberwithin{equation}{section}
\theoremstyle{plain}
\title{Exact Lagrangian tori in symplectic Milnor fibers constructed with fillings}
\author[Capovilla-Searle]{Orsola Capovilla-Searle}
\address[O.\ Capovilla-Searle]{Department of Mathematics \\ University of California Davis \\ Davis\\ CA \\ U.S.A.}
\email{ocapovillasearle@ucdavis.edu}
\begin{document}
\maketitle

\begin{abstract}
 We use exact Lagrangian fillings and Weinstein handlebody diagrams to construct infinitely many distinct exact Lagrangian tori in $4$-dimensional Milnor fibers of isolated hypersurface singularities with positive modality. We also provide a generalization of a criterion for when the symplectic homology of a Weinstein $4$-manifold is non-vanishing given an explicit Weinstein handlebody diagrams.
 \end{abstract}

\section{Introduction}

Given a Weinstein handlebody diagram of a Weinstein domain $(X,\lambda, \phi)$, where $\Lambda$ is a link of Legendrian attaching spheres, one can construct a closed exact Lagrangian submanifold $\overline{\Sigma}$ of $(X,\lambda, \phi)$ by taking the union of an exact Lagrangian filling $\Sigma$ of $\Lambda$ and the core Lagrangian disks of the handles attached along $\Lambda$. For any such Lagrangian, $[\overline{\Sigma}]\in H_n(X)$ is a primitive class. It is not known whether for every closed exact Lagrangian submanifold $\Sigma$ of a Weinstein domain $(X,\lambda, \phi)$ that is primitive in homology, if there exists a Weinstein handlebody diagram of $(X,\lambda, \phi)$ so that $\Sigma$ can be built from an exact Lagrangian filling~\cite[Remark $2.22$]{Casals_Murphy}. Vianna~\cite[Section 6.2]{vianna_2017} constructed an infinite family of exact Lagrangian tori in the 4-dimensional Milnor fibers of parabolic singularities. We show that there are infinite families of exact Lagrangian tori in a larger set of Milnor fibers of isolated hypersurface singularities using fillings:

\begin{theorem}~\label{prop:sublink}
For any $p,r\geq1$, and $q\geq 3$, there exists infinitely Hamiltonian non-isotopic exact Maslov-$0$ Lagrangian tori in Milnor fibers of $T_{p,q,r}$ singularities such that the Lagrangian tori are all smoothly isotopic and primitive in homology.
\end{theorem}

Applying properties of Milnor fibers of adjacent singularities to Theorem~\ref{prop:sublink} or Vianna's result~\cite[Section 6.2]{vianna_2017} we obtain the following corollary:

\begin{corollary}~\label{cor:inftori}
Suppose that $M_f$ is the Milnor fiber of a positive modality isolated hypersurface singularity $f$, then $M_f$ contains infinitely many Hamiltonian non-isotopic exact Maslov-$0$ Lagrangian tori that are all smoothly isotopic.
\end{corollary}

Milnor fibers of isolated hypersurface singularities have been extensively studied in homological mirror symmetry, singularity theory and low-dimensional topology. An isolated hypersurface singularity is the equivalence class of the germ of a holomorphic function, $f:\C^{n+1}\rightarrow \C$, whose differential has an isolated zero at the origin. The Milnor fiber of such a singularity is the smooth manifold $M_f=f^{-1}(\e_{\delta})\cap B_{\delta}(0)$ for suitably small $\delta>0$ and $\epsilon_\delta>0$~\cite{Milnor}. The modality of a singularity $f$ can be thought of as the dimension of a parameter space covering a neighborhood of $f$ in the space of singularities after an appropriate holomorphic reparametrization. Isolated hypersurface singularities of modality $0$ and $1$ have been classified~\cite{Arnold}. Unimodular isolated singularities are either parabolic, $T_{p,q,r}$ singularities, or one of Arnold's $14$ exceptional singularities. We focus on the Milnor fibers of $T_{p,q,r}$ singularities which were shown in~\cite[Section 2]{Keating} to be exact symplectomorphic to the affine variety
 $$T_{p,q,r}=\{(x,y,z)\in \C^3~|~x^p+y^q+z^r+xyz=1\},$$ 
for $p,q,r\in \Z_{\geq1}$ and $\frac{1}{p}+\frac{1}{q}+\frac{1}{r}\leq 1$.

The only possible exact Lagrangian submanifolds of $4$-dimensional Milnor fibers of modality $0$ singularities are spheres~\cite{IUU,Abouzaid_Smith}. This is not the case for $4$-dimensional Milnor fibers of positive modality singularities. See~\cite{keating_2021} for examples of monotone Lagrangian surfaces in Milnor fibers of positive modality singularities, and more generally in Brieskorn-Pham hypersurfaces. Keating in~\cite[Theorem 5.7]{Keating} constructed an exact Lagrangian torus in the $4$-dimensional Milnor fibers of unimodular singularities using explicit Lefschetz fibrations of the $4$-dimensional affine varieties $T_{p,q,r}$. Shende, Treumann and Williams in~\cite[Example $6.3$]{Shende_treumann_williams}, outline an argument that there are in fact infinitely many distinct Lagrangian tori that are not Hamiltonian isotopic in $T_{p,q,r}$ for $p,q,r\geq 1$. These potentially distinct tori are constructed from fillings of Legendrians in distinct Weinstein handlebodies of $T_{p,q,r}$ that are symplectomorphic but not yet shown to be Weinstein homotopic. In contrast, the tori we construct in Theorem~\ref{prop:sublink} are obtained as fillings from a single Weinstein handlebody decomposition. If $(p,q,r)\in\{(3,3,3),~(2,4,4), (2,3,6)\}$, then $T_{p,q,r}$ compactifies to a del Pezzo surface (see~\cite[Proposition $5.19$]{Keating}). Vianna in~\cite[Section 6.2]{vianna_2017} used this fact to construct infinitely many distinct exact Lagrangian tori in $T_{3,3,3}, T_{2,4,4},$ and $T_{2,3,6}$ using different almost toric fibrations of the corresponding del Pezzo surfaces.  %See~\cite[Section 7.4]{keating3} and~\cite[Remark 3.17]{Hacking_Keating} for a description of $T_{p,q,r}$ as $D^*T^2$ with $(p+q+r)$ $2$-handles attached along Legendrians. 

The Lagrangian torus that Keating constructs in $T_{p,q,r}$~\cite{Keating} implies that the Fukaya category of $T_{p,q,r}$ is not split generated by any collection of vanishing cycles as it has one generator for which this is true. Analogously, one corollary of Theorem~\ref{prop:sublink} is that the wrapped Fukaya category $T_{p,q,r}$ has an infinite number of generators which are not split generated by the vanishing cycles.

We start by producing a Weinstein handlebody diagram of the $4$-dimensional Weinstein manifold $T_{p,q,r}$ using an algorithm established by Casals and Murphy~\cite{Casals_Murphy} called the affine dictionary that allows one to obtain the Weinstein handlebody diagram of a Weinstein manifold from particular Lefschetz fibrations such as the Lefschetz fibration of $T_{p,q,r}$ given by Keating. A change in convention for the ordering of vanishing cycles and the Hurwitz move between the affine dictionary and Keating's Lefschetz fibration of $T_{p,q,r}$ resulted in an incorrect handlebody diagram in Figure $22$ of~\cite{Casals_Murphy}. We provide the corrected Weinstein handlebody diagram in Figure~\ref{fig:Tpqrhandle}.

\begin{proposition}\label{prop:Tpqr} 
The Weinstein $4$-dimensional domains
$$T_{p,q,r}=\{(x,y,z)\in \C^3~|~x^p+y^q+z^r+xyz=1\}$$
have the Weinstein handlebody diagram shown in Figure~\ref{fig:Tpqrhandle} for $p,q,r\geq 0$.
\end{proposition}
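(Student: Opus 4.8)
The plan is to derive the diagram from Keating's Lefschetz fibration of $T_{p,q,r}$ by running the Casals--Murphy affine dictionary \cite{Casals_Murphy}, while being scrupulous about the two conventions that produced the error in their Figure $22$. First I would recall from \cite{Keating} the Weinstein Lefschetz fibration of the affine variety: its regular fiber is a Weinstein surface $\Sigma$, and its critical points give an ordered distinguished collection of vanishing cycles $L_1,\dots,L_k$, each an embedded Lagrangian circle in $\Sigma$, whose geometric intersection pattern realizes the $T_{p,q,r}$ Coxeter--Dynkin configuration (a central vertex with three legs whose lengths are governed by $p$, $q$, and $r$). I would fix once and for all the cyclic ordering of the critical values in the base disk and the induced ordering of the $L_i$, since everything downstream is sensitive to it.

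The second step is to translate this fibration datum into a Legendrian handlebody via the dictionary. The total space is modeled as $\Sigma \times D^2$ with a Weinstein $2$-handle attached along each vanishing cycle, so the subcritical skeleton is encoded by a front for $\Sigma$, and each $L_i$ is promoted to a Legendrian attaching sphere in the contact boundary $\#^m(S^1\times S^2)$ by pushing the circle $L_i$ into the fiber slice dictated by its position in the ordering. Concatenating these fronts yields a first candidate handlebody diagram.

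The delicate, and decisive, step is reconciling the ordering conventions. The Casals--Murphy algorithm reads the vanishing cycles in a prescribed direction, whereas Keating's fibration naturally presents them in the opposite one, and converting between the two requires a sequence of Hurwitz moves. A Hurwitz move does not merely transpose two adjacent critical points: it replaces one vanishing cycle by its image under the Dehn twist along its neighbor. I would therefore carry out exactly the Hurwitz moves needed to pass from Keating's ordering to the dictionary's ordering, tracking at each stage how the corresponding Legendrian is modified in the front. Getting these Dehn-twist corrections, and the resulting crossings of the attaching spheres, right is the main obstacle, and it is precisely the point at which \cite{Casals_Murphy} slipped; the payoff is the corrected Figure~\ref{fig:Tpqrhandle}.

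Finally I would confirm that the front produced is the one drawn in Figure~\ref{fig:Tpqrhandle} and run consistency checks. The diagram should be uniform in $p,q,r$, with the three parameters entering only as the number of repeated handles along three arms, so that the cases $p,q,r\geq 0$ are covered simultaneously (with a degenerate arm when a parameter is small); and reading off the homology and, via the rotation numbers of the attaching spheres, the first Chern class should reproduce the known invariants of the Milnor fiber. As an independent sanity check, I would compare with the model of $T_{p,q,r}$ as $D^*T^2$ with $p+q+r$ two-handles in \cite{keating3, Hacking_Keating}, verifying that the two presentations are related by handle slides and have matching framings.
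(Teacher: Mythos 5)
Your proposal follows essentially the same route as the paper: start from Keating's Lefschetz fibration (regular fiber a thrice-punctured torus, vanishing cycles expressed as Dehn twists of the four curves $T,P,Q,R$), perform the Hurwitz moves needed to reconcile Keating's ordering convention with the Casals--Murphy one, and then apply the affine dictionary (Proposition $2.23$ of \cite{Casals_Murphy}) to draw the Legendrian lifts, with the position in the ordering determining the Reeb height. The paper's proof is exactly this plan carried out explicitly, writing down the specific list of vanishing cycles and the three Hurwitz moves that produce the corrected diagram.
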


We apply Legendrian Kirby calculus moves to the Weinstein handlebody diagram shown in Figure~\ref{fig:Tpqrhandle} to find another Weinstein handlebody diagram of $T_{p,q,r}$ for $p,r\geq 1$ and $q\geq 3$ that contains the Legendrian sublink $\Lambda(\beta_{22})$ given by the $(-1)$ closure of the positive braid $(\sigma_2\sigma_3\sigma_1\sigma_2)\sigma_1^2\sigma_3^2\in Br^+_4$. The $(-1)$ closure of a braid $\beta\in Br_n^+$ is the Legendrian link given by placing $\beta$ in a standard contact neighborhood of the standard max-tb Legendrian unknot in $(\R^3, \xi_{std})$. See the last diagram on the bottom right of Figure~\ref{fig:sublink2r} for a front of $\Lambda(\beta_{22})$ as a $(-1)$ closure of a positive braid. Casals and Ng~\cite[Proposition 7.4 and 7.5]{Casals_Ng} proved that $\Lambda(\beta_{22})$ has infinitely many distinct exact Lagrangian fillings of genus $1$ using Floer theoretic methods. We use their result in conjunction with one technical lemma (Lemma~\ref{lem:restricted}) to prove Theorem~\ref{prop:sublink}. 

Weinstein handlebody diagrams can also allow one to determine whether certain symplectic invariants vanish or not without computing them explicitly. Suppose that $X_{\Lambda}$ is a $4$-dimensional Weinstein manifold whose $2$-handles are attached along a Legendrian link $\Lambda$. The \textbf{symplectic homology} of a Weinstein manifold $X_{\Lambda}$ can be computed using the Legendrian contact homology differential graded algebra of $\Lambda$, denoted by $\A(\Lambda; \partial)$~\cite{BEE,Ekholm}. We extend a criterion of Leverson~\cite[Corollary 1.5]{Leverson} for the non vanishing of the symplectic homology of a Weinstein manifold given a Weinstein handlebody diagram. Symplectic homology, denoted by $S\mathbf{H}$, is a difficult invariant to compute, and this extension gives an easy way to check whether it vanishes for a slightly broader class of Weinstein manifolds.

\begin{theorem}~\label{thm:criterion}
Let $X_{\Lambda}$ be the Weinstein $4$-manifold resulting from attaching $2$-handles along a Legendrian link $\Lambda\subset (\#^m(S^1\times S^2), \xi_{std})$ with $n$ link components. If there is any sublink $\Lambda'$ with $l$ link components for $l\leq n$ such that its Legendrian differential graded algebra has a representation 
$$\rho: (\mathcal{A}(\Lambda';\Z[t_1^{\pm 1} \ldots, t_l^{\pm 1}]), \partial) \rightarrow End(V)$$
where $V$ is a vector space over $\Q$ and $\rho(t_k)=-Id$ for $k=1,\ldots, l$, then $S\mathbf{H}(X_{\Lambda})\neq0$.
\end{theorem}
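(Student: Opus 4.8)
The plan is to reduce the statement to the case $l=n$ via Viterbo functoriality, and then to upgrade Leverson's augmentation argument to one that uses a finite-dimensional representation, feeding into the Bourgeois--Ekholm--Eliashberg description of $S\mathbf{H}$ in terms of the Chekanov--Eliashberg DGA.

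First I would handle the sublink. Let $\Lambda' = \Lambda^{(1)}\cup\cdots\cup\Lambda^{(l)}$ and let $X_{\Lambda'}$ be the Weinstein $4$-manifold obtained by attaching only the $2$-handles along $\Lambda'$. Since the components of $\Lambda$ are disjoint, the remaining handles (along $\Lambda^{(l+1)}\cup\cdots\cup\Lambda^{(n)}$) are attached along Legendrians lying in the contact boundary of $X_{\Lambda'}$, so $X_{\Lambda'}$ is a Weinstein, in particular Liouville, subdomain of $X_\Lambda$. The Viterbo transfer map $S\mathbf{H}(X_\Lambda)\to S\mathbf{H}(X_{\Lambda'})$ is a unital ring homomorphism; hence if $S\mathbf{H}(X_\Lambda)=0$ then the unit $1=0$ in $S\mathbf{H}(X_\Lambda)$ maps to $1=0$ in $S\mathbf{H}(X_{\Lambda'})$, forcing $S\mathbf{H}(X_{\Lambda'})=0$. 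Taking the contrapositive, it suffices to prove $S\mathbf{H}(X_{\Lambda'})\neq 0$, so I may assume $l=n$ and work with the full link $\Lambda'$ and its representation $\rho$.

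Next I would invoke the DGA-theoretic vanishing criterion underlying Leverson's result: by \cite{BEE, Ekholm, Leverson}, the symplectic homology $S\mathbf{H}(X_\Lambda;\Q)$ is computed from the Chekanov--Eliashberg DGA, and in particular $S\mathbf{H}(X_\Lambda;\Q)=0$ forces the unit to be a boundary in the DGA after the specialization $t_k\mapsto -1$; that is, there exists $x$ with $\partial x = 1$ in $(\mathcal{A}(\Lambda;\Q[t_1^{\pm1},\dots,t_n^{\pm1}])/(t_k=-1),\partial)$. A representation $\rho\colon(\mathcal{A}(\Lambda;\Z[t_1^{\pm1},\dots,t_n^{\pm1}]),\partial)\to End(V)$ is precisely a unital algebra homomorphism satisfying $\rho\circ\partial=0$ (viewing $End(V)$ with the zero differential), and the hypothesis $\rho(t_k)=-Id$ guarantees that $\rho$ factors through the $t_k=-1$ specialization and extends $\Q$-linearly to $\mathcal{A}(\Lambda;\Q[t_1^{\pm1},\dots,t_n^{\pm1}])/(t_k=-1)$. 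Applying $\rho$ to the relation $1=\partial x$ would give $Id=\rho(1)=\rho(\partial x)=0$ in $End(V)$, which is impossible since $V\neq 0$. Hence the unit is not a boundary, so $S\mathbf{H}(X_{\Lambda'};\Q)\neq 0$, and by the previous step $S\mathbf{H}(X_\Lambda)\neq 0$.

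The genuinely delicate point, and the one I expect to be the main obstacle, is the sign and coefficient bookkeeping that pins down $t_k\mapsto -1$ as the correct specialization: the $-1$ arises from matching the orientation and spin conventions in the BEE surgery formula for $S\mathbf{H}$ over $\Q$ to the coefficient conventions in the DGA, so one must verify that a representation with $\rho(t_k)=-Id$, rather than $+Id$, is exactly the compatibility needed. A secondary point to check carefully is that the transfer map is indeed unital in the chosen conventions and that $X_{\Lambda'}\subset X_\Lambda$ is honestly a Liouville subdomain; granting these, the extension of Leverson's criterion from one-dimensional augmentations to finite-dimensional representations is the short algebraic step above.
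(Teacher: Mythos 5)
Your reduction to the full-link case via the Viterbo transfer is correct and is a clean alternative to what the paper does (the paper instead keeps all $n$ components and extends the representation by zero on every Reeb chord touching $\Lambda^{(l+1)},\dots,\Lambda^{(n)}$). The genuine problem is your second step. The implication ``$S\mathbf{H}(X_{\Lambda'};\Q)=0$ forces $1=\partial x$ in the specialized Chekanov--Eliashberg DGA'' is not contained in the references you cite: Leverson proves only the special case of one-dimensional augmentations, and her argument runs in the opposite direction (she exhibits nonzero classes in the Bourgeois--Ekholm--Eliashberg complex rather than deducing acyclicity of the DGA from vanishing of $S\mathbf{H}$); and the isomorphism $S\mathbf{H}(X_{\Lambda})\cong L\mathbf{H}^{H_0}(\Lambda)$ does not formally yield your claim, because $L\mathbf{H}^{H_0}$ is a Hochschild-type complex built from cyclically composable words together with extra degree-zero generators $\tau_1,\dots,\tau_n$, not the DGA itself. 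The statement you want is true, but establishing it requires the generation of the wrapped Fukaya category by the Lagrangian co-cores together with the identification of the wrapped Floer complex of the co-cores with the Chekanov--Eliashberg algebra; neither input is cited, and your stated worry (the sign in $t_k\mapsto -1$) is secondary compared to the existence of the implication itself. As written, the central lemma of your proof is asserted rather than derived.

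The paper avoids this machinery entirely by working at the chain level with $LC^{H_0}(\Lambda)=\widecheck{LCO}^{+}(\Lambda)\oplus\widehat{LCO}^{+}(\Lambda)\oplus\Q\langle \tau_1,\dots,\tau_n\rangle$: from $\rho$ it builds a map $\tilde{\rho}\colon LC^{H_0}(\Lambda)\to End(V^{\oplus n})$ by placing $\rho(c)$ in the $(k,r)$ block for a chord $c\in\mathcal{R}_{kr}(\Lambda)$ with $k,r\leq l$ and sending everything touching the remaining components to zero, verifies $\tilde{\rho}\circ d_{H_0}=0$ using $\rho\circ\partial=0$, and then observes that each $\tau_k$ is a cycle on which $\tilde{\rho}$ is nonzero, hence a nonzero class in $L\mathbf{H}^{H_0}(\Lambda)=S\mathbf{H}(X_{\Lambda})$. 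If you want to keep your route you must import the generation and co-core identifications explicitly with correct attributions; otherwise the argument should be converted into one that detects the classes $\tau_k$ directly, which is essentially the paper's proof.
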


An important theme in symplectic and contact topology is understanding flexibility phenomena which refers to when an object or situation is governed only by homotopy data. Flexible Weinstein manifolds abide by an $h$-principle~\cite{Cieliebak_eliashberg} and it is often difficult to determine whether a Weinstein manifold is flexible or not. Since a flexible Weinstein manifold has vanishing symplectic homology~\cite{Kai}, Theorem~\ref{thm:criterion} also gives a criterion for when a Weinstein manifold is not flexible. 

\begin{corollary}\label{cor:flexible}
Let $X_{\Lambda}$ be the Weinstein $4$-manifold resulting from attaching $2$-handles along a Legendrian link $\Lambda \subset (\#^m(S^1\times S^2), \xi_{std})$ with $n$ link components. Suppose there is a sublink $\Lambda'$ with $l$ link components for $l\leq n$, such that its Legendrian differential graded algebra has a representation 
$$\rho: (\mathcal{A}(\Lambda';\Z[t_1^{\pm 1} \ldots, t_l^{\pm 1}]), \partial) \rightarrow End(V)$$
where $V$ is a vector space over $\Q$ and such that $\rho(t_k)=-Id$ for $k=1,\ldots, l$. Then, $X_{\Lambda}$ is not a flexible Weinstein manifold. 
\end{corollary}

For an example of an application of Theorem~\ref{thm:criterion} and Corollary~\ref{cor:flexible}, see~\cite{Acu_capovilla_gadbled_marin_murphy_starkston_wu} where the authors showed that the Weinstein complement of certain divisors in toric $4$-manifolds are not flexible. 

\subsection{Outline of the article}In Section~\ref{sec:background}, we provide definitions and background material on Weinstein $4$-manifolds, exact Lagrangian fillings, and the Legendrian contact homology differential graded algebra. In Section~\ref{sec:restricted} we prove the technical lemmas that allow us to use results of~\cite{Casals_Ng} in the proof of Theorem~\ref{prop:sublink}. In Section~\ref{sec:milnor} we give the necessary background on Milnor fibers and Lefschetz fibrations and prove Theorem~\ref{prop:sublink}, Corollary~\ref{cor:inftori}, and Proposition~\ref{prop:Tpqr}. Finally in Section~\ref{sec:criterion}, we prove the new non-vanishing criterion for symplectic homology, Theorem~\ref{thm:criterion}.

\subsection{Acknowledgements}
The author thanks their adviser Lenhard Ng for many helpful conversations and suggestions. This project developed from a suggestion of Roger Casals, whom the author would also like to thank for various helpful conversations. The author also thanks James Hughes, Adam Levine, Ailsa Keating, Emmy Murphy, and Daping Weng for useful conversations. The author was first introduced to the topic of Weinstein manifolds and Lefschetz fibrations in the $2017$ Kylerec workshop. Finally, the author thanks the referee for valuable comments and suggestions. This work was supported by the NSF grants $DGE-1644868$, and $DMS-1707652$.

\section{Background}\label{sec:background}

\subsection{Weinstein $4$-manifolds}\label{sec:weinstein}

In this article we consider only Weinstein 4-manifolds which we will now briefly review. See~\cite{Cieliebak_eliashberg} for more details on Weinstein manifolds in general. Let $(X, \omega)$ be a symplectic manifold with non-empty boundary. A $1$-form $\lambda$ is a Liouville $1$-form if $d\lambda=\omega$. The vector field $Z_{\lambda}$ that is $\omega$-dual to $\lambda$ ($\iota_{Z_{\lambda}}\omega=\lambda$) is called the Liouville vector field. A domain $(X,\lambda, Z_{\lambda})$ is a Liouville domain if the Liouville vector field $Z_{\lambda}$ is complete and transverse to the boundary $\partial X$, and points outward along $\partial X$. For a Liouville domain $(X, \lambda,Z)$, there is an induced contact structure on $\partial X$ given by $ker(\lambda|_{\partial X})$. The completion of a Liouville domain $(X, \lambda, Z_{\lambda})$ is the non-compact manifold constructed from $X$ by attaching to the boundary of $X$ the symplectization of the boundary, $X\cup_{\partial} (\R \times \partial(X))$. A \textbf{Weinstein domain} $(X,\lambda, Z_{\lambda}, \phi)$ is a Liouville domain $(X, \lambda, Z_{\lambda})$ such that $Z_{\lambda}$ is gradient-like with respect to an exhausting Morse function $\phi: X\rightarrow \R$. A \textbf{Weinstein homotopy} on a Weinstein cobordism or manifold is a smooth family of Weinstein structures $(X_t, \lambda_t, Z_t, \phi_t)$, $t\in[0,1]$, where we allow birth-death type degenerations such that the associated Liouville structures $(X_t, \lambda_t, Z_t)$ form a Liouville homotopy. If two Weinstein domains $(X_0,\lambda_0,Z_0, \phi_0)$, and $(X_1,\lambda_1,Z_1, \phi_1)$ are Weinstein homotopic, then their completions are symplectomorphic.

The model Weinstein handle $\mathcal{H}$ of index $k$ in dimension $2n$ for $k\leq n$, $(\mathbb{D}^k\times \mathbb{D}^{2n-k}, \sum_{i=1}^ny_idx_i, Z_k, \phi_k)\subset (\R^{2n}, \omega_{std})$ is described in detail in~\cite{weinstein}. In particular, $\mathcal{H}$ has convex boundary $S^{k-1} \times \mathbb{D}^{2n-k}$ ($Z_k$ points outward to the handle), and concave boundary $\mathbb{D}^k \times S^{2n-k-1}$ ($Z_k$ points inward to the handle). The convex boundary has a natural contact structure induced by $Z_k$. Then, the components of the handle are: 
\begin{itemize}
\item the \textbf{core of the handle}, $\mathbb{D}^k \times \{0\}$, a Lagrangian submanifold of $\mathcal{H}$;
\item the \textbf{co-core of the handle} $\{0\} \times \mathbb{D}^{2n-k}$, a Lagrangian submanifold of $\mathcal{H}$;
\item the \textbf{attaching sphere} $S^{k-1} \times \{0\}$, an isotropic sphere in the convex boundary $(S^{k-1}\times \mathbb{D}^{2n-k})$;
\item the \textbf{belt sphere} $\{0\}\times S^{2n-k-1}$ in the concave boundary $(\mathbb{D}^{k}\times  S^{2n-k-1})$.
\end{itemize}

A Weinstein handlebody decomposition can then be given explicitly by attaching Weinstein handles each with locally defined Morse functions that one can assemble into a global Morse function. Weinstein handlebody decompositions only contain Weinstein $k$-handles for $k\leq n$~\cite{Eliashberg}. In fact, the Weinstein handlebody decomposition of any $2n$-dimensional Weinstein manifold $(X, \lambda, Z_{\lambda}, \phi)$, is encoded by the Legendrian submanifold $\Lambda \subset (\partial X_0, \lambda, Z_{\lambda}, \phi)$ corresponding to the attaching spheres of the critical $n$-handles, where $(X_0, \lambda, Z_{\lambda}, \phi)$ is the Weinstein subcritical domain given by attaching handles of index strictly less than $n$. 

Weinstein $4$-manifolds only admit handlebody decompositions with $0, 1$ and $2$-handles. Attaching $m$ Weinstein $1$-handles to a Weinstein $0$-handle $(\mathbb{D}^4, \omega_{std})$ gives the $4$-manifold whose boundary is $(\#^m(S^1\times S^2), \xi_{std})$. The attaching region of a $1$-handle is $S^0\times \mathbb{D}^3$. Diagrammatically, we consider $S^3$ as $\R^3$ with a point at infinity and draw the attaching region of the $1$-handle as a pair of $3$-balls that are identified with a reflection. A $2$-handle is then attached along a Legendrian knot $\Lambda \subset (\#^m(S^1\times S^2), \xi_{std})$. For a $4$-dimensional Weinstein $2$-handle attached along a null-homologous Legendrian link $\Lambda \subset (\#^m(S^1\times S^2), \xi_{std})$ the framing is given by $tb(\Lambda)-1$, where $tb(\Lambda)$ is the Thurston-Bennequin number of $\Lambda$. Although the Thurston-Bennequin number is not invariant under all Legendrian isotopies in $(\#^m(S^1\times S^2), \xi_{std})$, the canonical framing of the $2$-handle is invariant under contactomorphisms.

Gompf in~\cite[Theorem 2.2]{Gompf} showed that Legendrian links in $(\#^m(S^1\times S^2), \xi_{std})$ can be isotoped into a standard form where the attaching spheres of the $1$-handles are all vertically stacked and the Legendrian links can run through the $1$-handles but are otherwise contained to a rectangular area between the pairs of $3$-balls representing the $1$-handles. Within this rectangular area the contact structure, thanks to a contactomorphism, is $(\R^3, \xi_{std}=ker(dz-ydx))$ so we can use the front projection within this region to draw the fronts of the Legendrian links. The resulting diagram on the plane is a Weinstein handlebody diagram in Gompf standard form. See for example Figure~\ref{fig:handleslide}. The Thurston-Bennequin number of a Legendrian $\Lambda \subset (\#^m(S^1\times S^2), \xi_{std})$ can be computed from the front projection as in the case of a Legendrian link $\Lambda \subset (\R^3, \xi_{std})$. Two Legendrian links in $(\#^m(S^1\times S^2), \xi_{std})$ are related by Legendrian isotopies if and only if their front projections are related by a set of six moves~\cite{Gompf}, three of which are the usual Legendrian Reidemeister moves and three referred to as Gompf moves. Gompf move $4$ and $5$ corresponds to passing cusps and crossings through $1$-handles respectively. Gompf move $6$ corresponds to moving a Legendrian strand past the attaching region of a $1$-handle.

\begin{figure}\centering
\begin{tikzpicture}
\node[inner sep=0] at (0,0) {\includegraphics[width=0.7\linewidth]{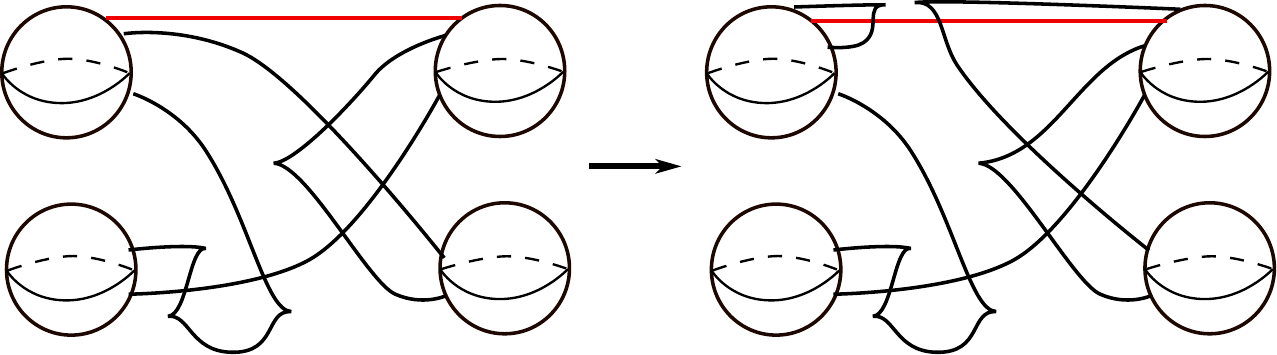}};
\node at (-4,1.7) {$h_1$};
\node at (-4,0) {$h_2$};
\node at (1.6,0.1) {$h_2+h_1$};
\node at (2,1.9) {$h_2$};
\end{tikzpicture}
\caption{A Legendrian handle slide of $h_2$ over $h_1$.} 
\label{fig:handleslide}
\end{figure}

Two Weinstein handlebody diagrams in Gompf standard form represent equivalent Weinstein domains if they are related by the following moves: isotropic or Legendrian isotopies of the attaching spheres, handle slides, handle cancellations and handle additions. If one has two Weinstein $k$-handles $h_1$ and $h_2$, a \textbf{handle slide} of $h_1$ over $h_2$ is given by isotoping the attaching sphere of $h_1$ and pushing it through the belt sphere of $h_2$, where the isotopy is an isotropic or Legendrian isotopy. In the case of a handle slide for a $4$-dimensional $1$-handle, one drags one of the $3$-balls in the attaching region of $h_1$, over $h_2$ and back following a Legendrian path. If $h_1$ and $h_2$ are $4$-dimensional Weinstein $2$-handles, with attaching spheres $\Lambda_1, \Lambda_2 \subset (\#^m(S^1\times S^2), \xi_{std})$, then the Weinstein handlebody diagram after sliding $h_1$ over $h_2$ has attaching spheres $\Lambda_1'$ and $\Lambda_2$, where $\Lambda_1'$ has a front projection given by taking a connect sum of $\Lambda_1$ with a push off of $\Lambda_2$~\cite[Proposition 1]{DingGeiges},~\cite[Proposition 2.14]{Casals_Murphy}. Figure~\ref{fig:handleslide} shows an example of a handle slide for $4$-dimensional Weinstein $2$-handles. One can add or cancel a pair of Weinstein handles $h_1$ and $h_2$ of index $k$ and $(k-1)$ respectively if the attaching sphere of $h_2$ intersects the belt sphere of $h_1$ transversely at one point~\cite[Proposition 2.17]{Casals_Murphy}. We call such a pair of handles a \textbf{canceling pair}; see Figure~\ref{fig:canceling} for an example of a canceling pair.

\begin{figure}\centering
\begin{tikzpicture}
\node[inner sep=0] at (0,0) {\includegraphics[width=0.7\linewidth]{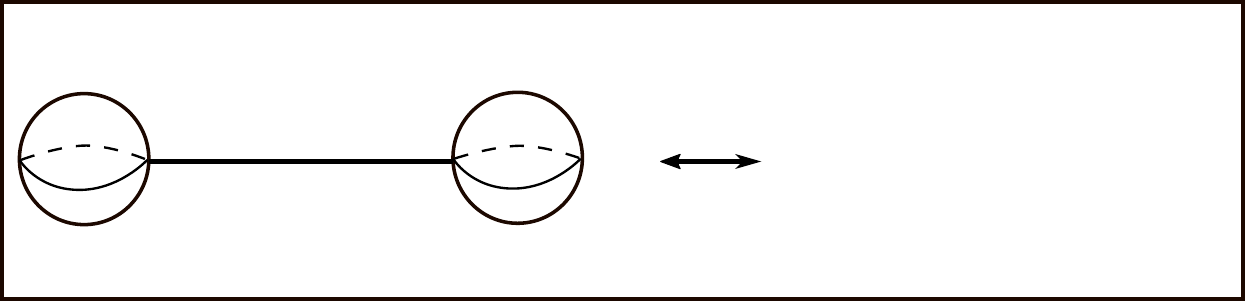}};
\end{tikzpicture}
\caption{A canceling pair of $4$-dimensional $1$ and $2$-handles.} 
\label{fig:canceling}
\end{figure}

\subsection{Exact Lagrangian surfaces}

\begin{definition}\label{defn:cobord} 
Let $(Y, ker(\alpha))$ be either $(\R^3, \xi_{std})$ or $(\#^m(S^1\times S^2), \xi_{std})$. Let $\Lambda_{+}$ and $\Lambda_{-}$ be Legendrian links in $(Y,ker(\alpha))$. An {\bf exact Lagrangian cobordism} $\Sigma$ from $\Lambda_{-}$ to $\Lambda_{+}$ is an embedded, oriented Lagrangian surface in the symplectization $(\R_t\times Y, d(e^t\alpha))$ that has cylindrical ends and is exact in the following sense: for some $N>0$, 
	\begin{enumerate}
		\item $\Sigma \cap ([-N,N]\times Y)$ is compact,
		\item $\Sigma \cap ((N,\infty)\times Y)=(N,\infty)\times \Lambda_{+}$, 
		\item $\Sigma\cap ((-\infty,-N)\times Y)=(-\infty,-N)\times \Lambda_{-}$, and 
		\item there exists a function $f: \Sigma \rightarrow \R$ and constants $\mathfrak c_\pm$ such that $e^t\alpha|_{\Sigma} = df$, where $f|_{(-\infty, -N) \times \Lambda_{-}} = \mathfrak c_{-}$, and $f|_{(N, \infty) \times \Lambda_{+}} = \mathfrak c_{+}$. We call $f$ the primitive on $\Sigma$.
	\end{enumerate}
	An {\bf exact Lagrangian filling} $\Sigma$ of a Legendrian link $\Lambda$ is an exact oriented Lagrangian cobordism from $\varnothing$ to $\Lambda$.
\end{definition}

The \textbf{Maslov number} of an exact Lagrangian surface $\Sigma$ is the greatest common divisor of the Maslov numbers of the closed loops in $\Sigma$. For a Legendrian $\Lambda$, the Maslov number of $\Lambda$ is the Maslov number of the surface $\R \times \Lambda$. We will always assume that all Legendrians and Lagrangian cobordisms are Maslov-$0$. 

\begin{remark}\label{rem:localsys}A \textbf{rank-$n$ local system} of an exact Lagrangian cobordism $\Sigma$ is a representation $\pi_1(\Sigma) \rightarrow GL(n,\F)$ where $\F$ is a field. Local systems allow one to enrich various invariants by recording homotopic data. By abuse of notation we refer to representations $\pi_1(\Sigma) \rightarrow GL(1, \Z)=\{+1, -1\}$ for orientable surfaces $\Sigma$ as (rank-$1$) local systems over $\Sigma$. Furthermore, since $GL(1,\Z)$ is abelian, it suffices to consider the maps $H_1(\Sigma;\Z)\rightarrow GL(1, \Z)$. 
\end{remark}

One can find exact Lagrangian surfaces in a Weinstein manifold using its Weinstein handlebody decomposition as follows. If $\Sigma\subset (X_0, \lambda, Z_{\lambda}, \phi) $ is an exact Maslov-$0$ Lagrangian filling of any of the critical attaching spheres $\Lambda\subset \partial X_0$, then the union of $\Sigma$ with the Lagrangian cores of the handles attached to $\Lambda$ is a closed exact Maslov-$0$ Lagrangian $\overline{\Sigma}$. See~\cite[Section 2.6]{Casals_Murphy} for some combinatorial constructions of exact Lagrangian submanifolds obtained from fillings of Legendrian attaching spheres for high dimensional Weinstein manifolds. We will use the fact that distinct exact Lagrangian fillings $\Sigma_1$ and $\Sigma_2$ can lead to distinct closed exact Lagrangian surfaces $\overline{\Sigma}_1$, and $\overline{\Sigma}_2$~\cite[Proposition 7.11]{Casals_Ng}. We first review how to distinguish fillings using Floer theory in the following subsection.

\subsection{Augmentations of the Legendrian contact differential graded algebra}\label{sec:aug}

The Legendrian contact homology differential graded algebra, $(\mathcal{A}(\Lambda;R), \partial)$, also known as the Chekanov-Eliashberg dg-algebra, is a Legendrian link invariant first defined for Legendrian links of $(\R^3, ker(dz-ydx))$ in~\cite{Chekanov} which fits into the larger SFT framework of~\cite{EGH}. For a more recent survey on this Legendrian invariant see~\cite{Etnyre_ng}. 

Let $\Lambda \subset (\#^m (S^1\times S^2), \xi_{std})$ be an oriented Legendrian link with $n$ components, $\Lambda = \Lambda^{(1)} \cup \cdots \cup \Lambda^{(n)}$ and with $k$ basepoints such that every component of $\Lambda$ has at least one basepoint decorated with a monomial $\pm t_i^{\pm}$. We will always assume that $\Lambda$ has rotation number zero. A Reeb chord $a$ of $\Lambda$ is a trajectory of the Reeb vector field which begins and ends on $\Lambda$. Let $\mathcal{R}(\Lambda)$ denote the set of Reeb chords. For a Reeb chord $a\in \mathcal{R}(\Lambda)$, let $a^+$ and $a^-$ denote the points of $\Lambda$ at the end and beginning of the Reeb chord. Then, define $r(a), c(a) \in \{1, \ldots, n\}$ as follows: $r(a)=i$ if $a^{-}\subset \Lambda^{(i)}$, and $c(a)=j$ if $a^{+}\subset \Lambda^{(j)}$. We say $a$ is a \textbf{mixed Reeb chord} if $r(a)\neq c(a)$ and a \textbf{pure Reeb chord} otherwise. We denote by $\mathcal{R}_{i, j}(\Lambda)$ the set of Reeb chords $a$ of $\Lambda$ such that $c(a)=i$ and $r(a)=j$.

\textbf{The generators of $\mathcal{A}(\Lambda;R)$:} The underlying algebra $\mathcal{A}(\Lambda;R)$ is a unital non-commutative graded algebra over the coefficient ring $R=\Z[t_1^{\pm 1}, \ldots, t_k^{\pm1}]$ that is generated by Reeb chords $a\in \mathcal{R}(\Lambda)$, and basepoints labeled by monomials $\pm t_i^{\pm}$ for $1\leq i\leq k$. There is a surjective map from $\Z[s_1^{\pm}, \ldots, s_k^{\pm}]$ to $\Z[H_1(\Lambda;\Z)]$ where each basepoint is mapped the homology class of the link component on which the basepoint lies. We assume that the basepoints $t_i^{\pm}$ commute with the Reeb chords. 

\textbf{The grading on $\A(\Lambda;R)$:} The grading is defined on the Reeb chord generators via a Conley-Zehnder index, and is defined on a word as the sum of the gradings of the letters in the word. See~\cite[Section $2.3$]{EES}, or~\cite[Section $3.1$ and Section $3.5$]{Etnyre_ng} for more details.

\textbf{The differential $\partial$:} The differential $\partial$ is defined by counting rigid $\mathcal{J}$-holomorphic disks in the symplectization of $(\#^m(S^1\times S^2), \xi_{std})$ with boundary on $\R \times \Lambda$. For Reeb chords $a, b_1 \ldots, b_m$ of $\Lambda$, denote by $\M(a;b_1, \ldots, b_m)$ the moduli space of $\J$-holomorphic disks 
$$u: (\mathbb{D}_{m+1}, \partial \mathbb{D}_{m+1}) \rightarrow (\R \times \R^3, \R\times \Lambda)$$
such that
\begin{itemize}
\item $\mathbb{D}_{m+1}$ is the disk with $m+1$ boundary points $p, q_1, \ldots, q_m$ removed and which are labeled in a counterclockwise order,
\item $u$ is asymptotic to $[0,\infty)\times a$ at $p$, and
\item $u$ is asymptotic to $(\infty, 0]\times b_i$ at $q_i$.
\end{itemize}
The quotient of $\M(a;b_1, \ldots, b_m)$ by a vertical translation of $\R$ is denoted by $\widetilde{\M}(a;b_1, \ldots, b_m)$. If $\dim(\widetilde{\M}(a;b_1, \ldots, b_m))=0$, then we say the disks $u\in \M(a;b_1, \ldots, b_m)$ are rigid. One can count such rigid $\J$-holomorphic disks over the ring $\Z[H_1(\Lambda;\Z)]$ as follows. For such a disk $u$ the boundary segment between $q_i$ and $q_{i+1}$ can be closed off on $\R \times \Lambda$ into a curve by concatenating with the capping paths from $q_i^-$ to $t_{c(q_i)}$, and the capping path from $t_{r(q_{i+1})}$ to $q_{i+1}^+$ since $c(q_i)=r(q_{i+1})$. Let $\tau_i$ denote the homology class of this curve. Then, $w(u)$ is the product of the Reeb chords and homology classes $\tau_0 \cdots \tau_m b_1\cdots b_m$. Here we are setting $b_0=a$. Moreover, if $\Lambda$ is spin then the moduli spaces of the $\J$-holomorphic curves that we are considering admit a coherent orientation~\cite{ekholm_etnyre_sullivan_Part2, Karlsson}, so one can assign a sign $sgn(u)\in \{-1,1\}$ to each rigid $\J$-holomorphic disk. For a Reeb chord $a\in \mathcal{R}(\Lambda)$, the differential is then given by
\begin{align}
\partial(a)= \sum_{\dim(\tilde{\M}(a;b_1, \ldots, b_m))=0}~\sum_{u\in \M(a;b_1,\ldots, b_m)}sgn(u)w(u).
\end{align}
Let $\partial t_i^{\pm}=0$ for any basepoint $t_i$, and extend the differential to $\A$ using the graded Leibniz rule:
$$\partial(uv)=\partial(u)v+(-1)^{|u|}u\partial(v).$$
There is a combinatorial way to compute the differential from the Lagrangian projection of $\Lambda$. See~\cite[Section 3]{Etnyre_ng} for $\Lambda\subset (\R^3, \xi_{std})$ and~\cite{Ekholm_ng} for $\Lambda\subset (\#^m (S^1\times S^2),\xi_{std})$. 

One of the main properties of the Legendrian dg-algebra is that it is functorial over exact Lagrangian cobordisms. In particular, exact Lagrangian fillings of $\Lambda$ induce augmentations $\e$ of $(\A(\Lambda; R), \partial)$~\cite{Ekholm_honda_kalman, Karlsson}.

\begin{definition} A \textbf{(graded) augmentation} of $(\A(\Lambda;R), \partial)$ to a ring of coefficients $R$ supported in grading zero is a chain map $\e: (\A(\Lambda;R), \partial)\rightarrow (R, 0)$ such that $\e(1)=1$, and for any element $a\in (\A(\Lambda; R), \partial)$ with nonzero grading $|a|\neq 0$ we have that $\e(a)=0$.
\end{definition}

\begin{theorem}[\cite{Ekholm_honda_kalman, Karlsson}]~\label{thm:induceaug}
Suppose that $\Sigma$ is a spin, oriented, embedded, Maslov-$0$ exact Lagrangian filling of the Legendrian link $\Lambda\subset (S^3, \xi_{std})$. Then $\Sigma$ induces a dg-algebra map
$$\e_{\Sigma}: (\A(\Lambda; \Z[H_1(\Sigma;\Z)]), \partial)\rightarrow (\Z[H_1(\Sigma;\Z)], 0)$$
where $\Z[H_1(\Sigma;\Z)]$ is in grading $0$. If $\Sigma$ and $\Sigma'$ are Lagrangian fillings of $\Lambda$ such that there exists a Hamiltonian isotopy from $\Sigma_1$ to $\Sigma_2$ through exact Lagrangian fillings of $\Lambda$ that fixes the boundary, then the corresponding augmentations $\e_{\Sigma}$ and $\e_{\Sigma'}$ are chain homotopic maps.
\end{theorem}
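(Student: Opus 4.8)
The plan is to define the induced augmentation $\e_\Sigma$ by counting rigid holomorphic disks with boundary on the filling, to verify the augmentation (chain-map) relation through a boundary analysis of one-dimensional moduli spaces, and to produce the chain homotopy from a moduli space parametrized by the Hamiltonian isotopy. First I would set up the relevant moduli spaces. Placing $\Sigma$ inside the symplectization $(\R_t\times\R^3, d(e^t\alpha))$ as in Definition~\ref{defn:cobord}, I would choose a generic compatible almost complex structure $\J$ that is cylindrical near the positive end. For each Reeb chord $a\in\mathcal{R}(\Lambda)$ let $\M_\Sigma(a)$ be the space of $\J$-holomorphic disks $u\colon(\mathbf{D}_1,\partial\mathbf{D}_1)\to(\R\times\R^3,\Sigma)$ with a single positive puncture asymptotic to $[0,\infty)\times a$ and boundary on $\Sigma$ (there is no negative end since $\Lambda_-=\varnothing$). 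Because $\Sigma$ is exact and Maslov-$0$, for $|a|=0$ this space is rigid and compact, the Maslov-$0$ condition forces the coefficient ring to sit in grading $0$, and for each $u$ the boundary $\partial u$ closed up with the capping paths at $a$ determines a class $[u]\in H_1(\Sigma)$. I would then set
$$\e_\Sigma(a)=\sum_{u\in\M_\Sigma(a)}\mathrm{sgn}(u)\,[u]\in\Z[H_1(\Sigma)]$$
for $|a|=0$, declare $\e_\Sigma(a)=0$ when $|a|\neq 0$, $\e_\Sigma(1)=1$, send each basepoint to its homology image, and extend multiplicatively; the signs come from the coherent orientation of the moduli spaces, which exists because $\Sigma$ is orientable, hence spin.

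Next I would prove the augmentation identity $\e_\Sigma\circ\partial=0$, equivalently $\e_\Sigma(\dd a)=0$ for every generator $a$ with $|a|=1$. For such $a$ the space $\M_\Sigma(a)$ has dimension $1$, and its SFT/Gromov compactification $\overline{\M}_\Sigma(a)$ is a compact $1$-manifold whose signed boundary count is zero. The codimension-$1$ degenerations are two-level buildings: one rigid disk in the symplectization with boundary on $\R\times\Lambda$, positive puncture $a$, and negative punctures $b_1,\dots,b_m$ (these produce the terms of $\dd a$), each capped off by a rigid filling disk in some $\M_\Sigma(b_i)$. Exactness together with the Maslov-$0$ hypothesis rules out all disk and sphere bubbles, so these are the only boundary strata; after the gluing argument the algebraic boundary count equals $\e_\Sigma(\dd a)$, which therefore vanishes.

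For the invariance statement I would run the same scheme over the isotopy. Let $\{\Sigma_t\}_{t\in[0,1]}$ be the path of exact fillings from $\Sigma_0=\Sigma$ to $\Sigma_1=\Sigma'$ fixing the boundary, inducing a path $\{\J_t\}$, and form the parametrized moduli space $\M^{\mathrm{par}}(a)=\bigcup_{t\in[0,1]}\{t\}\times\M_{\Sigma_t}(a)$. For $|a|=0$ the isolated parameter values at which a disk of expected dimension $-1$ appears define a degree-$1$ operator $K$ via $K(a)=\sum\mathrm{sgn}(u)\,[u]$ over these exceptional disks, extended as a $(\e_\Sigma,\e_{\Sigma'})$-derivation by the Leibniz rule of Definition~\ref{DGA-homotopy}. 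Analyzing the boundary of the one-dimensional components of $\M^{\mathrm{par}}(a)$ for $|a|=0$ yields the relation $\e_\Sigma(a)-\e_{\Sigma'}(a)=(K\circ\dd)(a)$, up to the unit factors $\alpha_i,\alpha_j$ coming from the homology classes of the cylindrical pieces: the endpoints $t=0,1$ contribute $\e_\Sigma$ and $\e_{\Sigma'}$, while the interior breakings contribute $K\circ\dd$. This is exactly the chain-homotopy (split DGA homotopy) relation of Definition~\ref{DGA-homotopy}.

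The hard part will be the analytic infrastructure rather than the formal bookkeeping: establishing transversality for $\J$ and for the generic path $\{\J_t\}$ so that all relevant moduli spaces are cut out transversally; proving SFT compactness adapted to a symplectization with a non-cylindrical Lagrangian end while excluding every form of bubbling by means of exactness and the Maslov-$0$ hypothesis; and fixing a coherent orientation scheme so that the signed counts and the signs appearing in the gluing and boundary identifications are consistent. The remaining technical point is verifying that the homology weights $[u]\in H_1(\Sigma)$ behave additively under gluing, so that the boundary of the one-dimensional spaces genuinely reproduces $\e_\Sigma(\dd a)$ and $K\circ\dd$ with the correct $\Z[H_1(\Sigma)]$-coefficients; this is where the capping-path conventions must be matched with care.
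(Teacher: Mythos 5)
This statement is quoted in the paper as a background result with citations to Ekholm--Honda--K\'alm\'an and Karlsson; the paper itself gives no proof, so there is nothing internal to compare against. Your outline is a faithful sketch of the standard argument in those references: the augmentation is defined by counting rigid disks with one positive puncture and boundary on the filling, the relation $\e_\Sigma\circ\partial=0$ comes from the boundary of the compactified one-dimensional moduli spaces, the chain homotopy comes from the parametrized moduli space over the isotopy, and you correctly identify that the real content lies in the transversality, SFT compactness, and coherent orientation analysis (the latter, over $\Z$, being precisely Karlsson's contribution). No gaps in the approach at the level of detail asked for.
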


Augmented Reeb chords $\e_{\Sigma}(a)\in \Z[H_1(\Sigma;\Z)$ count the number of rigid holomorphic disks $u$ asymptotic to $a$ with boundary on $\Sigma$ and records the homology class of each disk $u$. See~\cite[Section 3.5 and Section 4]{Casals_Ng} for how to compute $\e_{\Sigma}$ explicitly for decomposable exact Lagrangian fillings. Note that not all augmentations are induced by embedded Maslov-$0$ exact Lagrangian fillings, but they are induced from immersed exact Lagrangian fillings~\cite[Theorem 1.2]{Pan_rutherford}. We only consider embedded exact Lagrangian fillings from here on out.

For an exact embedded Lagrangian filling $\Sigma$, the induced augmentation $\e_{\Sigma}$ over $\Z[H_1(\Sigma;\Z)]$ is a family of augmentations over $\Z$ where each augmentation over $\Z$ corresponds to a choice of local system of $\Sigma$:
$$\e_{\Sigma}:\A(\Lambda; \Z[H_1(\Sigma;\Z)]), \partial) \rightarrow (\Z[H_1(\Sigma;\Z)], 0)\rightarrow (\Z,0).$$

One can also enlarge the coefficient ring $\Z[H_1(\Sigma;\Z)]$ to include link automorphisms. Suppose that $\Lambda$ is a link with $n$ components and $k$ basepoints so that each link component has at least one basepoint. Let $t_i$ denote the product of basepoints on $\Lambda^{(i)}$ for $1\leq i\leq n$, so that $t_i$ is in bijection with the generator of $H_1(\Lambda^{(i)};\Z)$. Let $e_1, \ldots, e_n$ be a choice of units in $\Z$. A link automorphism of $\Lambda$ is an automorphism $\psi$ of $(\mathcal{A}(\Lambda; \Z[H_1(\Sigma;\Z)]), \partial)$ such that if $a$ is a Reeb chord of $\Lambda$ that starts at the ith link component of $\Lambda$ and ends at the jth link component, then $\psi(a)=e_{i}e^{-1}_{j}a$. Now consider $\Z[H_1(\Sigma;\Z)]\oplus \Z[t_1^{\pm}, \ldots, t_{n-1}^{\pm}]$, where $t_i= e_i/e_n$ for $1\leq i< n$. Any augmentation over $\Z[H_1(\Sigma;\Z)]$ lifts to an augmentation over $\Z[H_1(\Sigma;\Z)\oplus \Z^{n-1}]$ by setting $\tilde{\e}(a)=e_{i}e^{-1}_{j}\e(a)$. The following definition allows one to work with such augmentations.

\begin{definition}[Definition 3.9~\cite{Casals_Ng}]
A \textbf{$k$-system of augmentations} of a Legendrian link $\Lambda$ is an algebra map
$$\e: \A(\Lambda; \Z[s_1^{\pm1}, \ldots, s_k^{\pm1}]) \rightarrow \Z[s_1^{\pm1}, \ldots, s_k^{\pm1}]$$
such that $\e \circ \partial=0$, $\e(1)=1$, and $\e(a)=0$ for any $a\in \A(\Lambda;\Z[s_1^{\pm1}, \ldots, s_k^{\pm1}])$ with nonzero grading. Two $k$-systems of augmentations 
$$ \e: \A(\Lambda;\Z[s_1^{\pm1}, \ldots, s_k^{\pm1}]) \rightarrow \Z[s_1^{\pm1}, \ldots, s_k^{\pm1}]~\text{and}~ \e ': \A(\Lambda;\Z[s_1^{\pm1}, \ldots, s_k^{\pm1}]) \rightarrow \Z[{s'_1}^{\pm1}, \ldots, {s'_k}^{\pm1}]$$
are equivalent if there exists a $\Z$-algebra isomorphism $$\phi: \Z[s_1^{\pm1}, \ldots, s_k^{\pm1}] \rightarrow \Z[{s'_1}^{\pm1}, \ldots, {s'_k}^{\pm1}]$$ such that $\e '=\phi \circ \e$. The space of such isomorphisms is parametrized by $\Z_2^k \times GL_k(\Z)$.
\end{definition}

\begin{remark}\label{rem:ksystems}Let $\Sigma_1$ and $\Sigma_2$ be two spin, oriented, embedded, Maslov-$0$ exact Lagrangian fillings of an $(n+1)$ component Legendrian link $\Lambda$. By~\cite[Theorem 1.4]{CHAN}, both $\Sigma_1$ and $\Sigma_2$ realize the smooth $4$-ball genus of $\Lambda$, so $rank(H_1(\Sigma_1;\Z))=rank(H_1(\Sigma_2;\Z))=k$. Then, by Theorem~\ref{thm:induceaug} we know that they both induce $(k+n)$-systems of augmentations $\e_1: \A(\Lambda; R_1)\rightarrow R_1$, and $\e_2: \A(\Lambda; R_2)\rightarrow R_2$ where $R_i\simeq \Z[H_1(\Sigma_i;\Z)]\oplus\Z^n$. If $\Sigma_1$ and $\Sigma_2$ are Hamiltonian isotopic, then there exists a $\phi \in \Z_2^{k+n} \times GL_{k+n}(\Z)$ such that $\phi(H_1(\Sigma_1;\Z))=H_1(\Sigma_2;\Z)$. Then, $\phi\circ \e_1: \A(\Lambda; R_1)\rightarrow R_2$, and $\phi\circ\e_1$ is chain homotopic to $\e_2$ by Theorem~\ref{thm:induceaug}. See~\cite[Definition 5.5]{Capovilla_Legout_Limouzineau_Murphy_Pan_Traynor} for when two augmentations of a Legendrian link are chain homotopic when working with coefficients over $\Z$. %split dg-algebra homotopic to $\e_2$. That is, there exists an operator $K$ as in Definition~\ref{dg-algebra-homotopy} for $R=R_2$ such that for all Reeb chords $a\in \mathcal{R}^{ij}(\Lambda)$$$\alpha_i\phi \circ \e_1(a)-\alpha_j\e_2(a)=K\circ \partial(a).$$
\end{remark}

Systems of augmentations induced by fillings that are not equivalent up to linear transformations can be used to distinguish exact Lagrangian fillings~\cite{Casals_Ng}.

%\begin{remark}\label{rem:cycle1} Suppose $\e_1$ and $\e_2$ are two $k$ systems of augmentations of $\Lambda$.
%As in Remark~\ref{rem:cycle} if $a$ is a Reeb chord of $\Lambda$ such that $\partial a=0$ and there does not exist any $\phi \in \Z_2^k \times GL_k(\Z)$ such that
%$$\phi \circ \e_1(a)=\frac{\alpha_j}{\alpha_i}\e_2(a)$$
%for any units $\alpha_i, \alpha_j=\pm1$, then  $\e_1$ and $\e_2$ are not split dg-algebra homotopic. In this particular set up, the fact that $\e_1$ and $\e_2$ are not equivalent as $k$-systems of augmentations also implies that they are not split dg-algebra homotopic.
%\end{remark}

\subsection{Distinguishing exact Lagrangian surfaces in Weinstein 4-manifolds with augmentations}\label{sec:laginweinstein}

Let $X_{\Lambda}$ denote the Weinstein $4$-manifold given by attaching $2$-handles to a Legendrian link $\Lambda\subset (\#^m(S^1\times S^2), \xi_{std})$. Recall that if $\Sigma$ is an exact Maslov-$0$ Lagrangian filling of $\Lambda$, then the union of $\Sigma$ with the Lagrangian cores of the handles attached to $\Lambda$ is a closed exact Maslov-$0$ Lagrangian $\overline{\Sigma}\subset X_{\Lambda}$. Let the system of augmentations induced by $\Sigma$ be denoted by
$$\e_{\Sigma}: (\A(\Lambda; \Z[H_1(\Sigma;\Z)]), \partial)\rightarrow (\Z[H_1(\Sigma;\Z)], 0).$$
In general, composing $\e_{\Sigma}$ with a local system on $H_1(\Sigma;\Z)$ results in a family of $\Z$ valued augmentations. Such a composition is well behaved under equivalence of local systems and systems of augmentations. Since $H_1(\Sigma;\Z)$ and $H_1(\overline{\Sigma};\Z)$ are not in general isomorphic (namely, if $\Lambda$ is a link with more than one component), not all local systems on $\Sigma$ extend to local systems on $\overline{\Sigma}$. We are interested in the systems of augmentations where this extension is possible.

\begin{definition}[Definition $7.9$~\cite{Casals_Ng}]\label{defn:restricted}
Let $\Sigma$ be an exact Lagrangian filling of a Legendrian link $\Lambda \subset (S^3, \xi_{std})$ that induces a system of augmentations $\e_{\Sigma}: (\A(\Lambda; \Z[H_1(\Sigma;\Z)]), \partial)\rightarrow (\Z[H_1(\Sigma;\Z)], 0)$, where $\Lambda$ is equipped with the null-cobordant spin structure. The \textbf{restricted system of augmentations} associated to $\Sigma$ is the composition
$$\e_{\overline{\Sigma}}: (\A(\Lambda; \Z[H_1(\Sigma;\Z)]), \partial)\rightarrow (\Z[H_1(\Sigma;\Z)], 0)\rightarrow (\Z[H_1(\overline{\Sigma};\Z)], 0), $$
where the second map is induced by the quotient map $H_1(\Sigma;\Z)\rightarrow H_1(\overline{\Sigma};\Z)$
\end{definition}

Suppose one starts with the Legendrian dg-algebra of $\Lambda$ $(\A(\Lambda ; \Z[H_1(\Lambda;\Z)]), \partial)$ such that $\Lambda$ has the Lie group spin structure. To translate to the dg-algebra where $\Lambda$ has a null-cobordant spin structure we map $t_i\rightarrow -t_i$ where $t_i$ is the product of basepoints on the link component $\Lambda^{(i)}$ which represents $H_1(\Lambda^{(i)};\Z)$. Since each homology component of $\Lambda$ is null-homologous in $\overline{\Sigma}$, then any restricted augmentation defined over the null cobordant spin structure must map $t_i\rightarrow -1$. For Legendrian knots, $H_1(\Sigma;\Z)$ is isomorphic to $H_1(\overline{\Sigma};\Z)$ for null-homologous knots $\Lambda$, so all augmentations are restricted for knots.
\begin{definition}
A Legendrian link $\Lambda$ is \textbf{aug-infinite} if the collection of all $\Z$ valued augmentations $\e_{\overline{\Sigma}}(\A(\Lambda; \Z), \partial)\rightarrow (\Z, 0)$ induced by Maslov-$0$ exact Lagrangian fillings $\Sigma$ of $\Lambda$, ranging over all such fillings, is infinite. A Legendrian link $\Lambda$ is \textbf{restricted aug-infinite} if the collection of all $\Z$ valued restricted augmentations $\e_{\overline{\Sigma}}(\A(\Lambda; \Z), \partial)\rightarrow (\Z, 0)$ induced by Maslov-$0$ exact Lagrangian fillings $\Sigma$ of $\Lambda$, ranging over all such fillings, is infinite.
\end{definition}

\begin{remark}
Note that to check whether a Legendrian $\Lambda$ is restricted aug-infinite one must work with the null-cobordiant spin structure. Whereas, to check whether a Legendrian $\Lambda$ is aug-infinite one might have to also work with the Lie group spin structure. Note also that the number of basepoints on a link component does not matter, as we always consider the product of basepoints representing the homology of that link component.
\end{remark}

\begin{proposition}[Proposition $7.11$ in~\cite{Casals_Ng}]\label{prop:weinstein_different}
Let $\Lambda\subset (S^3, \xi_{std})$ be a Legendrian link and $\Sigma_1, \Sigma_2\subset (\mathbb{D}^4, \omega_{std})$ two exact Lagrangian fillings of $\Lambda$. Suppose that the two restricted systems of augmentations
$$\e_{\overline{\Sigma}_1}: \A(\Lambda)\rightarrow \Z[H_1(\overline{\Sigma}_1;\Z)], ~~\e_{\overline{\Sigma}_2}: \A(\Lambda)\rightarrow \Z[H_1(\overline{\Sigma}_2;\Z)],
$$
are not dg-algebra homotopic. Then, the exact Lagrangian surfaces $\overline{\Sigma}_1, \overline{\Sigma}_2 \subset W(\Lambda)$ are not Hamiltonian isotopic in the Weinstein $4$-manifold $X_\Lambda$.
\end{proposition}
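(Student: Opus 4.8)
The plan is to prove the contrapositive: assuming a Hamiltonian isotopy $\phi_t$ of $X_\Lambda$ carrying $\overline{\Sigma}_1$ to $\overline{\Sigma}_2$, I would show that the induced restricted augmentations $\e_{\overline{\Sigma}_1}$ and $\e_{\overline{\Sigma}_2}$ are DGA homotopic. The conceptual engine is the identification, via the Legendrian surgery and wrapped Fukaya machinery of Bourgeois--Ekholm--Eliashberg and Ekholm--Lekili, of $\A(\Lambda)$ with the endomorphism DGA of the Lagrangian co-core disks of the $2$-handles inside the wrapped Fukaya category $\mathcal{W}(X_\Lambda)$. Under this dictionary, a closed exact Lagrangian becomes an object of $\mathcal{W}(X_\Lambda)$, Hamiltonian isotopy becomes quasi-isomorphism of objects, and the augmentation records how that object pairs against the co-core generators.

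First I would set up the geometric picture precisely. Write $X_\Lambda = \mathbf{D}^4 \cup (\text{$2$-handles along } \Lambda)$ and realize $\overline{\Sigma}_j = \Sigma_j \cup \bigcup_i D_i$ as the union of the filling with the Lagrangian core disks $D_i$ of the handles. Each co-core disk $C_i$ meets $\overline{\Sigma}_j$ transversally in exactly one point (it meets the core $D_i$ once and is disjoint from $\Sigma_j$ and from the remaining cores), so the morphism complex $CF^*(\overline{\Sigma}_j, C_i)$ is generated geometrically by a single intersection point together with the Reeb-chord generators produced by wrapping $C_i$; these Reeb chords are exactly the generators of $\A(\Lambda)$. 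I would then verify that the resulting $\A(\Lambda)$-module structure on $\bigoplus_i CF^*(\overline{\Sigma}_j, C_i)$ is the rank-one module determined by $\e_{\overline{\Sigma}_j}$.

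The key technical point, and the step I expect to be the main obstacle, is that this module is defined over the correct coefficient ring $\Z[H_1(\overline{\Sigma}_j)]$ rather than $\Z[H_1(\Sigma_j)]$; that is, that the module genuinely recovers the restricted augmentation of Definition~\ref{defn:restricted} and not merely the filling augmentation of Theorem~\ref{thm:induceaug}. The reason is that the holomorphic strips contributing to the module differential and action may absorb the core disks $D_i$, so the $1$-cycles they bound are measured in $H_1(\overline{\Sigma}_j)$; the classes of the attaching circles $\Lambda^{(i)}$, which survive in $H_1(\Sigma_j)$, become null-homologous once the cores are glued in. Tracking this requires the spin-structure bookkeeping of the excerpt, namely that passing to the null-cobordant spin structure forces each basepoint to be sent to $-1$, which is exactly compatible with the quotient map $H_1(\Sigma_j) \to H_1(\overline{\Sigma}_j)$. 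Establishing this identification carefully, with coherent orientations and matching signs, is the crux of the argument.

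Finally I would invoke invariance. A Hamiltonian isotopy is an isomorphism in $\mathcal{W}(X_\Lambda)$, so it induces a quasi-isomorphism of $\A(\Lambda)$-modules $CF^*(\overline{\Sigma}_1, \cdot) \simeq CF^*(\overline{\Sigma}_2, \cdot)$ covering the isomorphism $\Z[H_1(\overline{\Sigma}_1)] \cong \Z[H_1(\overline{\Sigma}_2)]$ induced by $\phi_t$. For rank-one modules a module quasi-isomorphism is precisely a DGA homotopy of the associated augmentations, so $\e_{\overline{\Sigma}_1}$ and $\e_{\overline{\Sigma}_2}$ are DGA homotopic, contradicting the hypothesis and completing the contrapositive. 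A more hands-on alternative that sidesteps the full categorical formalism would be to drill out the co-cores, whose complement in $X_\Lambda$ deformation retracts onto $\mathbf{D}^4$ with $\overline{\Sigma}_j$ restricting to $\Sigma_j$, and convert the ambient Hamiltonian isotopy into an exact Lagrangian cobordism in the symplectization of $S^3$, then apply DGA functoriality (Theorem~\ref{thm:induceaug}) to extract the homotopy; the difficulty there is controlling the behavior at the Legendrian boundary $\Lambda$ and, once again, tracking the $H_1(\overline{\Sigma}_j)$ coefficients.
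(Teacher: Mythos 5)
Your proposal follows essentially the same route as the paper's (sketched) argument for this result, which is quoted from Casals--Ng: the co-cores generate the wrapped Fukaya category of $X_\Lambda$, the Yoneda modules $CW(C,\overline{\Sigma}_j)$ are the rank-one $\A(\Lambda)$-modules determined by the restricted systems of augmentations, and a Hamiltonian isotopy would force these modules to be quasi-isomorphic and hence the augmentations to be DGA homotopic. Your identification of the coefficient-ring issue ($\Z[H_1(\overline{\Sigma}_j)]$ versus $\Z[H_1(\Sigma_j)]$) as the technical crux is exactly why the statement is phrased in terms of restricted augmentations, so the approach matches.
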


The proof of this proposition relies on the the fact that for such a Weinstein $4$-manifold, $X_{\Lambda}$, the Wrapped Fukaya Category of $X_{\Lambda}$ is generated by the union of co-cores of the Weinstein $2$-handles of $X_{\Lambda}$~\cite{Chantraine_dimitriglou_2017}. Then, by considering the Yoneda embedding $Hom(C, -):=C\mathcal{W}(C,-)$, they show $C\mathcal{W}(C, \overline{\Sigma}_1)$ and $C\mathcal{W}(C, \overline{\Sigma}_2)$ are distinct as $C\mathcal{W}(C)$-modules and conclude that $ \overline{\Sigma}_1$ and $ \overline{\Sigma}_2$ are distinct objects in $\mathcal{W}(X_{\Lambda})$.

\section{Exact Lagrangian fillings and restricted augmentations}\label{sec:restricted}

In this section we prove that certain Legendrians are restricted aug-infinite to conclude that Weinstein manifolds whose Weinstein handlebody diagrams contain a restricted aug-infinite Legendrian sublink then have infinitely many distinct closed exact Lagrangians. We begin by upgrading the following proposition. 

\begin{proposition}[Proposition 7.5~\cite{Casals_Ng}]\label{cor:wig1}
Suppose $\Lambda_{-}$ is an aug-infinite Legendrian link in $(\R^3, \xi_{std})$. If there exists a decomposable Maslov-$0$ exact Lagrangian cobordism $\Sigma$ from $\Lambda_{-}$ to $\Lambda_{+}$, then $\Lambda_{+}$ is also aug-infinite.
\end{proposition}

Decomposable exact Lagrangian cobordisms are the concatenation of three types of elementary cobordisms~\cite{Ekholm_honda_kalman}: perturbations of Legendrian isotopy traces, minimum cobordisms (essentially the unique lagrangian disk filling of the max-tb unknot $U$~\cite{eliashberg_polterovich_1996}), and saddle cobordisms. In the following proof we use the combinatorial framework given in~\cite[Section 3.5]{Casals_Ng} to construct systems of augmentations of decomposable fillings. We will call a product of basepoints on a link component $\Lambda^{(i)}$ distinguished if they represent the generator of $H_1(\Lambda^{(i)};\Z)$.

\begin{lemma}\label{lem:restricted}
Suppose that $\Lambda_{-}, \Lambda_+ \subset (\R^3, \xi_{std})$ are Legendrians links such that there exists a decomposable exact Maslov-$0$ Lagrangian cobordism from $\Lambda_-$ to $\Lambda_+$. Then, any restricted augmentation of $\Lambda_-$ lifts to a restricted augmentation of $\Lambda_+$.
\end{lemma}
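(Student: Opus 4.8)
The plan is to reduce the statement to the algebraic characterization of restrictedness and then transport it across the cobordism by functoriality. First I would recall, from Definition~\ref{defn:restricted} and the discussion following it, that in the Lie group spin structure a restricted augmentation is exactly one that sends every component basepoint $t_i$ to $-1$: the map to $\Z[H_1(\overline{\Sigma})]$ kills the class of each link component because that component bounds a core disk in $\overline{\Sigma}$. Thus, writing $\e_+ := \e_- \circ \Phi_\Sigma$ for the augmentation of $\Lambda_+$ obtained from a restricted augmentation $\e_-$ of $\Lambda_-$ via the cobordism map $\Phi_\Sigma \colon \A(\Lambda_+) \to \A(\Lambda_-)$, it suffices to show that $\e_+(t_j) = -1$ for every component basepoint $t_j$ of $\Lambda_+$.

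Next I would realize $\e_-$ by an exact Maslov-$0$ filling $\Sigma_-$ of $\Lambda_-$ and concatenate with $\Sigma$ to obtain a filling $W = \Sigma \circ \Sigma_-$ of $\Lambda_+$; by functoriality (Theorem~\ref{thm:induceaug}) the induced augmentation is $\e_- \circ \Phi_\Sigma$, and capping $\Lambda_+$ with the core disks gives $\overline{W}$. Since each component $\Lambda_+^{(j)}$ bounds its core disk, it is null-homologous in $\overline{W}$, so the restricted augmentation $\e_{\overline{W}}$ associated to $W$ automatically sends each $t_j$ to $-1$ and is therefore a restricted augmentation of $\Lambda_+$. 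The content of the lemma is then that this $\e_{\overline{W}}$ is a genuine lift of the given $\e_-$, i.e. that the rank-$1$ local system and spin data on $\overline{\Sigma_-}$ defining $\e_-$ extend over $\overline{W}$ in a way compatible with capping $\Lambda_+$.

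The hard part will be exactly this compatibility of spin data across the cobordism. Concretely, I would track how $\Phi_\Sigma$ acts on basepoints: component basepoints of $\Lambda_+$ are carried along arcs in $\Sigma$ to component basepoints of $\Lambda_-$, while each pinch ($1$-handle) of the cobordism introduces a paired set of basepoints $s, -s^{-1}$. One must check that, after evaluating the restricted $\e_-$ (which sends every $\Lambda_-$ component basepoint to $-1$), the paired saddle basepoints cancel and the signs coming from the coherent orientation assemble to give exactly $-1$; this is where the quadratic, non-linear nature of spin holonomy enters, and one uses that $\Sigma$, being orientable and Maslov-$0$, is spin and that the null-cobordant spin structures on $\Lambda_-$ and $\Lambda_+$ are restrictions of a single spin structure on $\Sigma$. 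For a general (not necessarily decomposable) exact cobordism I would avoid checking elementary moves and instead argue via the naturality of the map induced by $\Phi_\Sigma$ on the coefficient rings $\Z[H_1(-)]$, using the homological relation in each connected component of $\Sigma$ that expresses the $\Lambda_+$ boundary meridians in terms of the $\Lambda_-$ meridians and interior classes, together with the fact that capping $\Lambda_+$ forces all of these to evaluate trivially.
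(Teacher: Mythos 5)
Your reduction is the same as the paper's: a restricted augmentation is exactly one sending every component basepoint to $-1$ (in the Lie group spin convention), so the task is to arrange that $\e_+=\e_-\circ\Phi_\Sigma$ does this on every component of $\Lambda_+$. But the proposal stops precisely where the content of the lemma begins: the sentence ``one must check that \dots the paired saddle basepoints cancel and the signs \dots assemble to give exactly $-1$'' \emph{is} the proof, and as sketched it only covers half of it. For an elementary saddle the new basepoints enter via $\e_+(t_i)=\e_-(t_i)s$ and $\e_+(t_j)=-\e_-(t_j)s^{-1}$. When the saddle merges two components of $\Lambda_-$ into one component of $\Lambda_+$, the basepoints $t_i,t_j$ land on the same component of $\Lambda_+$, the $s$'s cancel in the product, and $\e_+(t_it_j)=-\e_-(t_i)\e_-(t_j)=-1$ automatically; this is the case your ``cancellation'' describes. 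When the saddle splits one component of $\Lambda_-$ into two components of $\Lambda_+$, the basepoints $t_i$ and $t_j$ lie on \emph{different} components of $\Lambda_+$, nothing cancels, and the lift exists only after a choice: one must set the local system value on the new class to $\eta(s)=-\e_-(t_i)$ and then use $\e_-(t_it_j)=-1$ to verify that both components evaluate to $-1$. That the lift is a non-canonical choice of extension of the local system is the actual point of the lemma (and is why the statement is a ``lift'' rather than a functorial pushforward); your proposal does not identify this case or this choice.

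Your fallback argument for general (non-decomposable) cobordisms also fails as stated. The only universal relation among boundary classes in a connected piece of $\Sigma$ is that the sum of the incoming classes equals the sum of the outgoing ones in $H_1(\Sigma)$, so a homomorphism $H_1(\Sigma)\to\{\pm1\}$ taking the value $-1$ on every boundary circle exists only when the numbers of incoming and outgoing circles have the same parity --- already false for a pair of pants. The discrepancy is absorbed exactly by the sign in $-s^{-1}$, i.e.\ by first translating to the null-cobordant spin structure via $t_i\mapsto -t_i$, after which the restricted condition becomes $t_i\mapsto +1$ and the homological extension argument is consistent. You list the right ingredient (a spin structure on $\Sigma$ restricting to those on $\Lambda_\pm$) but never perform this translation, and without it the parity obstruction is fatal. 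For comparison, the paper sidesteps all of this by verifying the two saddle cases directly with the explicit basepoint formulas above.
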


\begin{proof}

Let $\e_-$ denote a restricted augmentation induced by a filling on $\Lambda_-$ and $\e_+$ the augmentation on $\Lambda_+$ induced by concatenating with an elementary cobordism $\Sigma$ from $\Lambda_-$ to $\Lambda_+$. It suffices to check that for each link component of $\Lambda_+$ the distinguished product of basepoints is mapped to $-1$ by $\e_+$. 

If $\Sigma$ is induced by a Legendrian isotopy, then both Legendrian have the same number of link components which we can decorate with a single basepoint $t_i$ so that $\e_+(t_i)=\e_-(t_i).$ By assumption, $t_i$ is a distinguished basepoint on both Legendrians, and $\e_-$ is a restricted augmentation. So $\e_-(t_i)=-1$ and therefore $\e_+(t_i)=-1$. If $\Sigma$ is a minimum cobordism, then $\Lambda_+=U\sqcup \Lambda_-$. Decorate $\Lambda$ with $n$ basepoints so each link component is decorated with a single basepoint and $U$ is decorated with $t_0$. Then, $\e_+(t_0)=-1$, and $\e_+(t_i)=\e_-(t_i)$ for $i\neq 0$. For $i\neq 0$, $t_i$ is a distinguished basepoint on each link component of $\Lambda_-$, so $\e_+(t_i)=\e_-(t_i)=-1$. 

Suppose that $\Sigma$ is a saddle cobordism. Then, $\Lambda_+$ has one more or one fewer link component than $\Lambda_-$. Suppose it is the jth and kth link component of $\Lambda_+$ or $\Lambda_-$ which merge into one. Let $\sigma$ denote the unstable manifold on $\Sigma$ and mark with basepoints $s$, and $-s^{-1}$ the two endpoints of $\sigma$ on $\Lambda_-$. See for example~\cite[Figure 16]{Casals_ng}. Then, we have that
$$\e_+(t_j)=\e_-(t_j)s~~\text{and}~~~\e_{+}(t_k)=-\e_{-}(t_k)s^{-1},$$
where $t_j$ and $t_k$ are the basepoints on the link component that is split or merged by $\Sigma$ in $\Lambda_+$.

If $\Lambda_+$ has one fewer link component than $\Lambda_-$, then $t_jt_k$ is a distinguished basepoint on $\Lambda_+$ which represents the homology class of one of its link components. Since $\Lambda_-$ is assumed to be restricted aug-infinite, $\e_-(t_j)=\e_-(t_k)=-1$. Therefore $\e_+(t_jt_k)=-\e_-(t_j)\e_-(t_k)ss^{-1}=-1$. If $\Lambda_+$ has one more link component than $\Lambda_-$, then $\e_{-}(t_jt_k)=-1$. Choose a local system such that $\eta(s)=-\e_-(t_j)$. Then, $\eta \circ \e_+(t_j)=\e_-(t_j)\eta(s)=-1$, and $\eta \circ \e_+(t_k)=-\e_-(t_k)\eta(s^{-1})=-1$. In both cases for all other basepoints, we have $\e_+(t_i)=\e_-(t_i)=-1$. Thus, a restricted augmentation on $\Lambda_-$ lifts to a restricted augmentation on $\Lambda_+$.
\end{proof}

The following corollary is an inmediate consequence of Lemma~\ref{lem:restricted} and Proposition~\ref{cor:wig1}.

\begin{corollary}\label{cor:cobordism_restricted}
Suppose $\Lambda_{-}$ is a restricted aug-infinite Legendrian link in $(\R^3,\xi_{std})$. If there exists an exact Maslov-$0$ Lagrangian cobordism $\Sigma$ from $\Lambda_{-}$ to $\Lambda_{+}$, then $\Lambda_{+}$ is also restricted aug-infinite.
\end{corollary}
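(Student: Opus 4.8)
The plan is to deduce the statement directly by combining Lemma~\ref{lem:restricted} with the injectivity of cobordism maps on categories of augmentations that underlies Corollary~\ref{cor:wig1}. Since $\Lambda_{-}$ is restricted aug-infinite, there is by definition an infinite collection of pairwise-distinct $\Z$-valued restricted augmentations $\e^i_-$ of $\Lambda_{-}$, each induced by a Maslov-$0$ exact Lagrangian filling $\Sigma^i_-$ of $\Lambda_{-}$.

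First I would concatenate the given cobordism $\Sigma$ with each filling $\Sigma^i_-$ to obtain a Maslov-$0$ exact Lagrangian filling of $\Lambda_{+}$, which induces an augmentation $\e^i_+$ of $\Lambda_{+}$ by functoriality of the Legendrian contact homology DGA (Theorem~\ref{thm:induceaug}). Applying Lemma~\ref{lem:restricted} to the cobordism $\Sigma$ shows that each lift $\e^i_+$ is again a restricted augmentation of $\Lambda_{+}$, since a restricted augmentation of $\Lambda_{-}$ lifts to a restricted augmentation of $\Lambda_{+}$.

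Next I would invoke injectivity on categories of augmentations (Theorem $1.4$ of~\cite{Capovilla_Legout_Limouzineau_Murphy_Pan_Traynor}), exactly as in the proof of Corollary~\ref{cor:wig1}: distinct augmentations of $\Lambda_{-}$ induce distinct augmentations of $\Lambda_{+}$ under concatenation with $\Sigma$. Since the $\e^i_-$ are pairwise distinct, their images $\e^i_+$ are pairwise distinct as well. This yields an infinite family of pairwise-distinct restricted augmentations of $\Lambda_{+}$ induced by fillings, so $\Lambda_{+}$ is restricted aug-infinite.

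The only subtlety, and the step I would be most careful about, is ensuring that distinctness is tracked at the level of restricted augmentations rather than merely unrestricted ones. This is not a genuine obstacle: distinctness of the restricted augmentations $\e^i_-$ is distinctness as augmentations, so the injectivity statement applies verbatim, while Lemma~\ref{lem:restricted} guarantees that the resulting distinct augmentations of $\Lambda_{+}$ remain restricted. The corollary therefore follows at once from the two cited results.
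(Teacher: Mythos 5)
Your argument is correct and is essentially the paper's own: the paper states this corollary as an immediate consequence of Lemma~\ref{lem:restricted} together with the injectivity of cobordism-induced maps on augmentations underlying Corollary~\ref{cor:wig1}, which is exactly the combination you spell out. Your added care about tracking distinctness at the level of restricted augmentations is a reasonable elaboration of what the paper leaves implicit.
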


\begin{corollary}\label{cor:restrictedfamilies}
The Legendrian links $\Lambda(\beta_{ab}) \subset (\R^3,\xi_{std})$ given by the $(-1)$ closure of the positive braid $(\sigma_2\sigma_1\sigma_3\sigma_2)^4\sigma_1^a\sigma_3^b\in Br_4^+$, for $a,b\geq 1$ are restricted aug-infinite.
\end{corollary}

\begin{proof}
For $a,b>1$ there exists a Maslov-$0$ exact Lagrangian cobordism from $\Lambda(\beta_{11})$ to $\Lambda(\beta_{ab})$ constructed using saddle cobordisms. In particular, this cobordism is constructed by pinching $(a-1)$ and $(b-1)$ crossings of $\Lambda(\beta_{ab})$ to obtain $\Lambda(\beta_{11})$. Casals and Ng showed that $\Lambda(\beta_{11})$ is restricted aug-infinite in the proof of Corollary 1.7 in~\cite{Casals_Ng}. By Corollary~\ref{cor:cobordism_restricted}, we can then conclude that $\Lambda(\beta_{ab})$ are restricted aug-infinite for $a,b\geq 1$.
\end{proof}

We apply Proposition~\ref{prop:weinstein_different} and Corollary~\ref{cor:restrictedfamilies} to conclude the following corollary. The fact that the Lagrangian surfaces that we consider are smoothly isotopic follows from~\cite[Theorem 1.1 and Proposition 7.1]{Casals_Ng}.

\begin{corollary}\label{thm:weinsteinsublink}
Let $X$ be a Weinstein manifold that has a Weinstein handlebody diagram containing as a Legendrian sublink the link $\Lambda(\beta_{ab})$ for some $a,b\geq 1$. Then $X$ contains infinitely many Hamiltonian non-isotopic Lagrangian surfaces of genus $\frac{a+b-c+2}{2}$ respectively, where $2\leq c\leq 4$ is the number of link components of $\Lambda(\beta_{ab})$. These surfaces are all Maslov-$0$, smoothly isotopic and primitive in homology.
\end{corollary}

\section{Constructing infinitely many distinct Lagrangian Tori in Milnor fibers}\label{sec:milnor}

Let $f: \C^{n+1} \rightarrow \C$ be a holomorphic function such that $f(0)=0$, $df|_0=0$ and $df\neq 0$ on $B_r(0)\backslash \{0\}$ for a sufficiently small $r\in \mathbb{R}$. An \textbf{isolated hypersurface singularity} at $0$ is the equivalence class of the germ of such a holomorphic function $f$, up to bi-holomorphic changes of coordinates that fix $0$. For an isolated hypersurface singularity $f$, the \textbf{Milnor fiber} of $f$ is the smooth manifold
$$ M_f:=f^{-1}(\e_{\delta})\cap B_{\delta}(0)$$
for suitable choices of $\delta>0$ and $\e_{\delta}>0$ which depends on our choice of $\delta$. Milnor proved that $M_f$ is independent of the aforementioned choices~\cite{Milnor}. As the intersection of a hypersurface with a ball, the Milnor fiber has an exact symplectic structure inherited from $(\C^{n+1}, d(\frac{i}{4}\sum_{i=0}^n (z_id\bar{z_i} \wedge \bar{z_i}dz_i))$. Furthermore, the negative Liouville flow is given by 
the gradient flow of $h(x)$ with respect to the standard K\"{a}hler metric. Choose a cut-off function
$$h_{A}(x)=|| Ax||^2$$
for some $A\in GL_{n+1}(\C)$. Then,
$$f^{-1}(\e_{\delta}) \cap \{h_{A}(x)\leq \delta\}$$
is an exact symplectic manifold with contact type boundary since the negative gradient flow of $h$ points strictly inwards at any point of the hypersurface $|| Ax||^2= \delta$. Thus, we can describe the Milnor fiber as a Liouville domain which we can then complete by attaching cylindrical ends. Call this the completed Milnor fiber of $f$ and denote it by $M_f$. Keating~\cite[Lemmas 2.6 and 2.7]{Keating} showed that $M_f$ is independent of the choice of holomorphic representative of $f, A, \delta$, and $\e_{\delta}$ up to exact symplectomorphisms. 

Isolated hypersurface singularities are equivalent to polynomials. This follows from the fact that the $(\mu+1)$-jet of a function $f$ at an isolated critical point with Milnor number $\mu$ is sufficient. That is, for any other function $g$ with the same $(\mu+1)$-jet, there exists a biholomorphic change of coordinates between $f$ and $g$. The $k$-jet of a function is the Taylor series of the mapping truncated at degree k and deleting the constant term. The Milnor number $\mu$ of $f$ is the number of isolated non-degenerate singularities near $0$ in a morsification of $f$. 

One important property of a singularity is its modality. The group $G$ of germs of diffeomorphisms $(\C^n,0)\rightarrow (\C^n,0)$ acts on the space of function germs $f:(\C^n, 0)\rightarrow (\C, 0)$. The modality of a singularity $f$ is the least integer $m$ such that a sufficiently small neighborhood of $f$ is covered by a finite number of $m$-parameters of orbits. Moreover, this action of $G$ on the space of function germs $f:(\C^n, 0)\rightarrow (\C, 0)$ induces an action on the $k$-jet space of these function germs. In fact, the \textbf{modality} of a singularity $f$ is the modality of any of its $k$-jets for $k\geq \mu(f)+1$ where $\mu$ is the Milnor number of $f$. Isolated hypersurface singularities of modality $0$ and $1$ are classified~\cite{Arnold}. There are three families of unimodular isolated singularities: three parabolic singularities, the hyperbolic series $T_{p,q,r}$ and $14$ exceptional singularities. The hyperbolic singularities $T_{p,q,r}$ are given by 
$$x^p+y^q+z^r+axyz,$$
where $a\in \C^*$, $p,q,r\in \Z_{\geq 0}$ and $\frac{1}{p}+\frac{1}{q}+\frac{1}{r}<1$. The three parabolic singularities are the $T_{p,q,r}$ singularities such that $\frac{1}{p}+\frac{1}{q}+\frac{1}{r}=1$ (i.e. $(p,q,r)\in\{(3,3,3),(2,4,4),(2,3,6)\}$). If $f$ is a weighted homogeneous function with a single isolated singularity at $0$, then $M_f$ is exact symplectomorphic to any of the hypersurfaces $f^{-1}(\e)$ for any $\e \in \C^{*}$ equipped with the standard K\"{a}hler exact symplectic form. The $T_{p,q,r}$ singularities are not in general weighted homogeneous, but Keating~\cite[Lemma 2.16]{Keating} proved that they are independent of $a\in \C^*$, so we can set $a=1$.

\begin{figure}[h!]
    \centering
\begin{tikzpicture}
	\node[inner sep=0] at (0,0) {\includegraphics[width=0.8\linewidth]{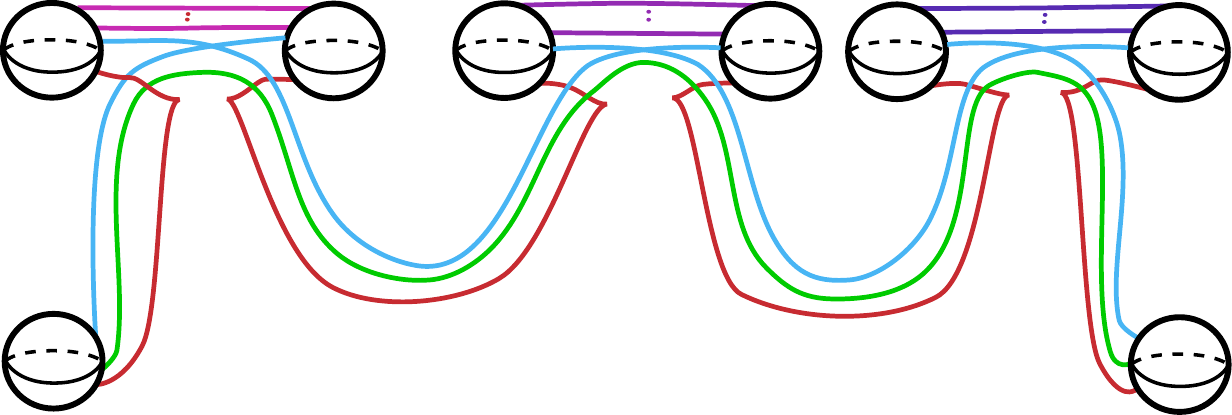}};
\node at (-4,-1.5) {$V_{-2}$};
\node at (-4.1, 0) {$V_{-1}$};
\node at (-5.5, 0.5) {$V_0$};
\node at (-6.5, 1.8){$V_1$};
\node at (-6.5, 2.4){$V_{p}$};
\node at (-1.95, 1.8){$V_{p+1}$};
\node at (-1.95, 2.4){$V_{p+q}$};
\node at (6.9, 1.8){$V_{p+q+1}$};
\node at (6.9, 2.4){$V_{p+q+r}$};
\end{tikzpicture}
\caption{A Weinstein handlebody diagram of the Milnor fiber of a $T_{p,q,r}$-singularity where $p,q,r\geq0$.}~\label{fig:Tpqrhandle}
\end{figure}

In summary, Keating~\cite{Keating} showed that the Milnor fiber of $T_{p,q,r}$ singularities are given by the affine varieties $T_{p,q,r}=\{(x,y,z)\in \C^3~|~x^p+y^q+z^r+xyz=1\}\subset \C^3$ for $p,q,r$ such that $p,q,r\in \Z_{\geq0}$ and $\frac{1}{p}+\frac{1}{q}+\frac{1}{r}\leq 1$. In~\cite{keating2} she gave a Lefschetz fibration $\Xi:T_{p,q,r} \rightarrow \C$ shown with regular fiber $\Xi^{-1}(0)$ symplectomorphic to a thrice punctured torus and $(p+q+r+3)$ distinct critical values which we will use to construct a Weinstein handlebody diagram of $T_{p,q,r}$. We will now briefly review Lefschetz fibrations for $4$-manifolds. See~\cite{Seidel} for a comprehensive treatment of Lefschetz fibrations.

\begin{definition}
Let $\D^2 \subset \C$ denote the closed unit disk, and let $X$ be a compact complex manifold with corners of real dimension $4$. A Lefschetz fibration $\pi: X\rightarrow \D^2$ is as follows:
\begin{itemize}
\item a map $\pi$ that is a proper smooth fibration except for a finite number of critical points $crit(\pi)$ in the interior of $X$;
\item near each critical point $p\in crit(\pi)$, there are local holomorphic coordinates in which $\pi$ is given by $(z_1, z_2)\rightarrow \pi(p)+z_1^2+z_2^2$. All critical values are distinct;
\item each smooth fiber of $\pi$ is a Liouville domain.
\end{itemize}
\end{definition}

Consider a Lefschetz fibration on an affine variety $\pi: X \rightarrow \C$, denote the regular fiber $\pi^{-1}(0)$ by $F_{\pi}$ and critical values by $c_1, \ldots, c_s$ in the interior of $\mathbb{D}^2$. For every critical value $c_i$, there is an associated \textbf{vanishing cycle} $V_i\subset F_{\pi}$, which is the boundary of an embedded Lagrangian disk $\Delta_i\subset X$. The image of the Lagrangian embedded disk $\pi(\Delta_i)$ is an embedded path $\gamma_i:[0,1]\rightarrow \mathbb{D}^2$ with endpoints $\gamma_i(0)=0$ and $\gamma_i(1)=c_i$. The vanishing cycle $V_i$ is then $\Delta_i\cap F_{\pi}$, a Lagrangian sphere in the regular fiber. A Lefschetz fibration $\pi:X\rightarrow \C$ can be recuperated from the data of the regular fiber $F_{\pi}$ and the ordered set of vanishing cycles, whose ordering is determined by the cyclic ordering of the vanishing paths. 

Giroux and Pardon~\cite[Theorem 1.5]{giroux_pardon_2017} proved that any $2n$-dimensional Weinstein manifold $(X, \lambda, Z, \phi)$ admits a Lefschetz fibration $\pi: (X, \lambda, Z, \phi)\rightarrow \C$. In the case that $X$ is an affine variety it is simple to construct a Lefschetz fibration up to isotopy by using a generic hyperplane section~\cite{mclean_2009}. In fact, one can obtain a Weinstein handle-decomposition of $X$ from the data of the Lefschetz fibration, that is from the regular fiber $F_{\pi}$ and the set of vanishing cycles $\{V_1, \ldots, V_s\}$. We will now describe this process for $4$-dimensional Weinstein manifolds. Start with the Weinstein domain $F_{\pi}\times \mathbb{D}^2$ with contact boundary $F_{\pi}\times S^1\subset \partial(F_{\pi}\times \mathbb{D}^2)$ and attach Weinstein $2$-handles along the Legendrian lifts of the exact Lagrangian vanishing cycles $V_{1}, \ldots, V_n$. The ordering of the vanishing cycles determines the relative Reeb height of the Legendrian lifts. Thus, the data of a Lefschetz fibration, a regular fiber $F$ and an ordered set of vanishing cycles $\{V_1, \ldots, V_s\}$, determines a Weinstein domain which we denote by $lf(F; V)$. Two Weinstein manifolds $lf(F; V)$, and $lf(F'; V')$ are Weinstein equivalent if $F'$ is Weinstein equivalent to $F$ and the vanishing cycles $V'$ are Hamiltonian isotopic to those the vanishing cycles $V$, up to a cyclic shift. Moreover, there are two additional Weinstein equivalences given by changing the Lefschetz fibration by stabilization or Hurwitz moves. Hurwitz moves are given by:
\begin{align*}
lf(F, \{V_1, \ldots, V_i, V_{i+1}, \ldots, V_s\})&=lf(F, \{V_1, \ldots, V_{i+1}, \tau_{V_{i+1}}(V_i),\ldots, V_s\}),~\text{and}\\
lf(F, \{V_1, \ldots, V_i, V_{i+1}, \ldots, V_s\})&=lf(F, \{V_1, \ldots, \tau^{-1}_{V_{i}}(V_{i+1}),V_i,\ldots, V_s\}),
\end{align*}
where $\tau$ is a Dehn twist. 

Casals and Murphy~\cite[Recipe 3.3]{Casals_Murphy} establish an algorithm they call the affine dictionary which inputs certain Lefschetz fibrations $lf(F;V)$ of complex affine varieties and outputs a Weinstein handlebody diagram of $lf(F;V)$ by providing the fronts of the Legendrian lifts of the vanishing cycles $V$. We apply the affine dictionary to Keating's Lefschetz fibration $\Xi:T_{p,q,r} \rightarrow \C$ in order to produce a Weinstein handlebody diagram of $T_{p,q,r}$. Note that we are using the conventions for Lefschetz fibrations from~\cite{Casals_Murphy} where we order vanishing cycles counterclockwise and also use the Hurwitz move convention in~\cite{Casals_Murphy} where $\{V_i ,~V_{i+1} \}\rightarrow \{\tau^{-1}_{V_i} V_{i+1} ,~V_{i}\}$.

\begin{figure}[h!]
\centering
	\begin{tikzpicture}
	\node[inner sep=0] at (0,0) {\includegraphics[width=8
	cm]{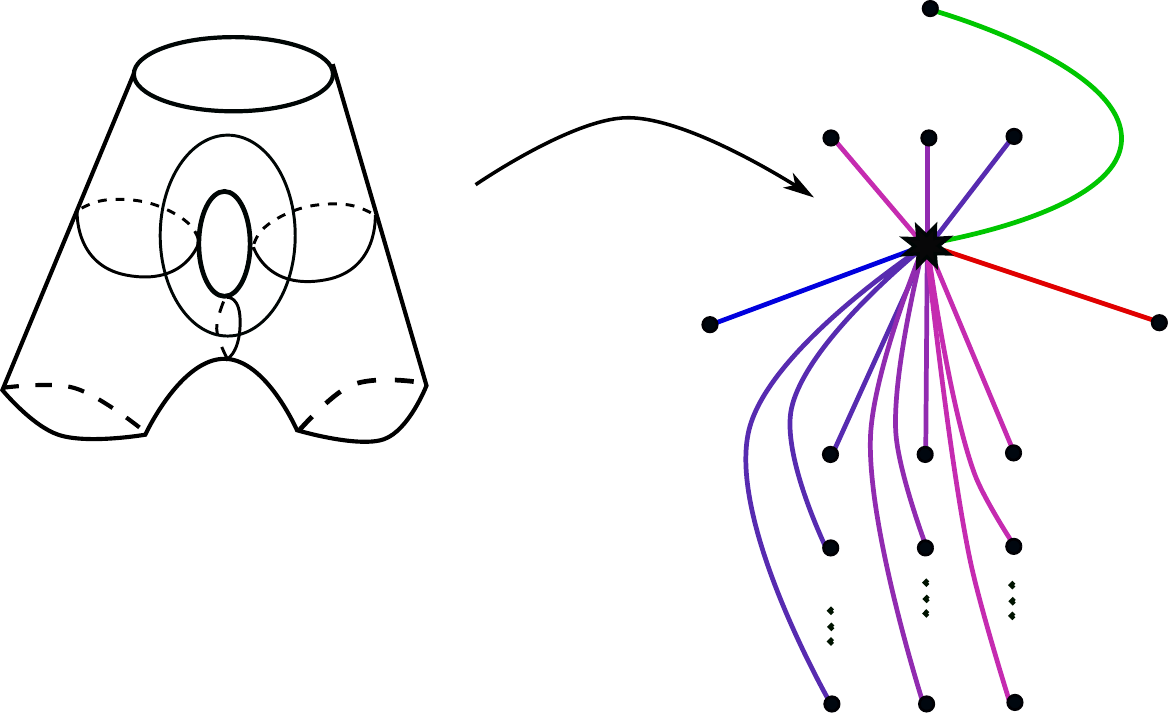}};
\node at (4,0.5) {$V_{-2}$};
\node at (3.6,-0.5) {$V_{p+q+r}$};
\node at (-2.9,1.45) {$T$};
\node at (-3.8,0.8) {$P$};
\node at (-2.5,-0.5) {$Q$};
\node at (-1,0.8) {$R$};
 \end{tikzpicture}\caption{A Lefschetz fibration of $T_{p,q,r}$ whose vanishing cycles are Dehn twists of the four curves $T,P,Q,R$ on the regular fiber.}\label{fig:lefschetzTpqr}
\end{figure}

\newtheorem*{prop:Tpqr}{Proposition~\ref{prop:Tpqr}}
\begin{prop:Tpqr} The Weinstein domains 
$$T_{p,q,r}=\{(x,y,z)\in \C^3~|~x^p+y^q+z^r+xyz=1\}$$
have the Weinstein handlebody diagram shown in Figure~\ref{fig:Tpqrhandle} for $p,q,r\geq0$. 
\end{prop:Tpqr}

\begin{proof}
We start with the Lefschetz fibration $\Xi:T_{p,q,r}\rightarrow \C$ from~\cite{keating2} which has vanishing cycles that can be written as Dehn twists of the four curves four curves $T,P,Q,R$ on the regular fiber $\Xi^{-1}(0)$ shown in Figure~\ref{fig:lefschetzTpqr}. These vanishing cycles are:
\begin{align*}V_{-2}&=\tau_P\tau_Q\tau_R T, ~V_{-1}=T,~ V_{0}=P,~V_1=Q,~V_2=R,~
V_3=T,\\V_4&=\ldots =V_{p+3}=P,~ V_{p+4}=\ldots=V_{p+q+3}=Q,~V_{p+q+4}= \ldots=V_{p+q+r}=R.\end{align*}

We apply three Hurwitz moves and obtain the following collection of vanishing cycles:
\begin{align*}V_{-2}&=\tau_P\tau_Q\tau_R T, ~V_{-1}=T,~ V_{0}=\tau^{-1}_P\tau^{-1}_Q\tau^{-1}_R T,\\V_1&=
\ldots= V_p=P, ~V_{p+1}=\ldots V_{p+q}=Q,~V_{p+q+1}=\ldots= V_{p+q+r}=R.\end{align*}

We now apply the affine dictionary and obtain the Weinstein handlebody diagram shown in Figure~\ref{fig:Tpqrhandle}. First we use the Lefschetz fibration on the regular fiber $\Xi^{-1}(0)$ where the Lagrangian skeleton of consists of Lagrangian spheres $T,P,Q$, and $R$. We already know how to write the vanishing cycles of $\Xi$ as Dehn twists of these four curves $T,P,Q,R$, so we are in Step $5$ of the affine dictionary and can use Proposition $2.23$ of~\cite{Casals_Murphy} to draw the front projection of their Legendrian lifts. Note that the order of the vanishing cycles determines the Reeb height of the Legendrian lifts.
\end{proof}

\begin{figure}[h!]
    \centering
\begin{tikzpicture}
\node[inner sep=0] at (0,0) {\includegraphics[width=0.9\linewidth]{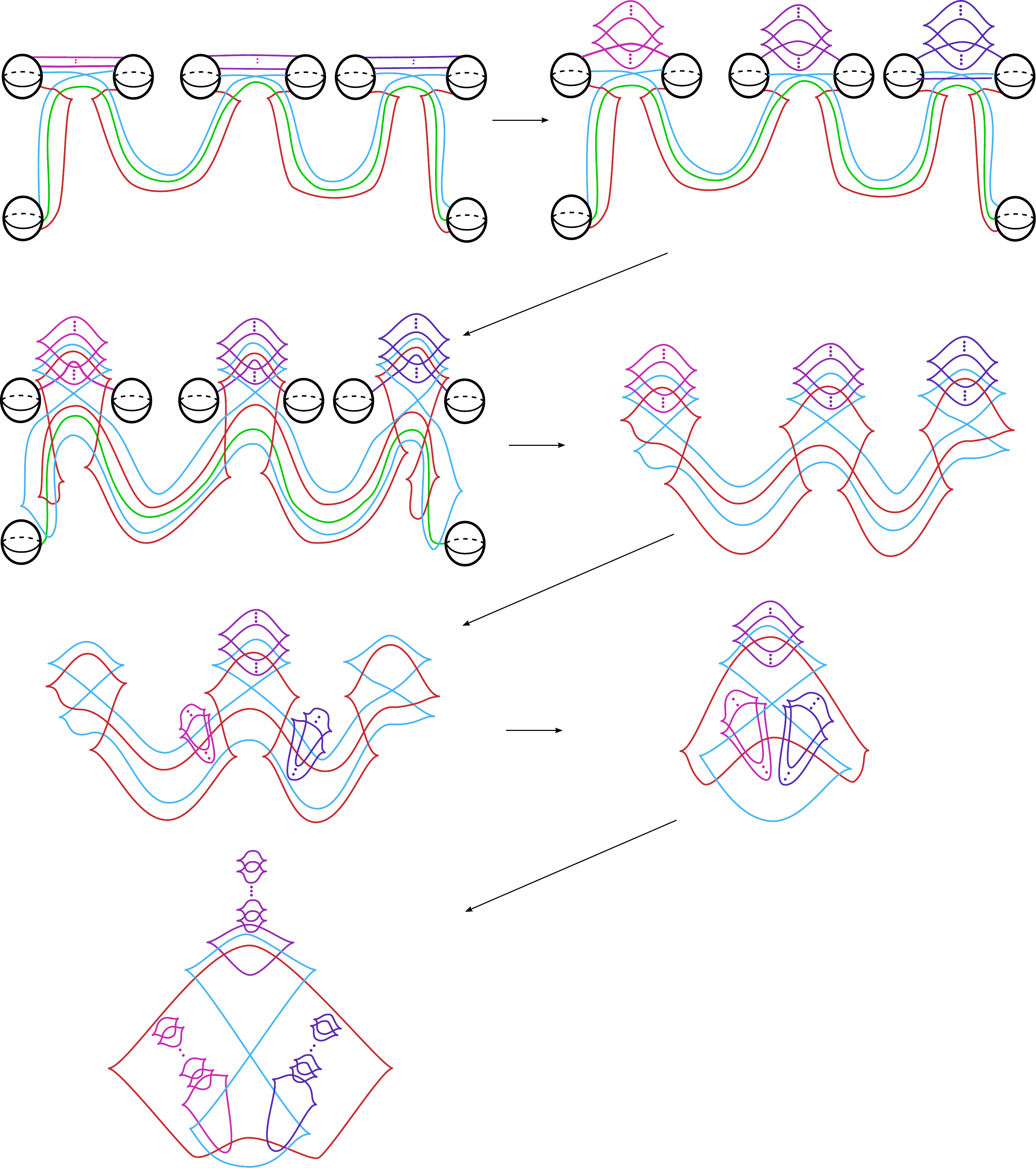}};
\node at (0,6.5) {$A$};
\node at (0, 4) {$B$};
\node at (0.3,2.2) {$C$};
\node at (0.3,.3) {$D$};
\node at (0.3,-1.5) {$E$};
\node at (0.3,-3.5) {$F$};
\end{tikzpicture}
\caption{We simplify the Weinstein handlebody diagram of $T_{p,q,r}$ for $p,q,r\geq1$ as follows: $(A)-(B)$ perform various handle-slides; $(C)$ cancel out the $1$-handles; $(D)-(E)$ perform Legendrian Reidemeister moves; $(F)$ use the fact that one can handle-slide an $N$-copy of the unknot to a chain of $N$ unknots.}\label{fig:Tpqr1}
\end{figure}

\begin{figure}
\centering
\begin{tikzpicture}
\node[inner sep=0] at (0,0) {\includegraphics[width=0.85\linewidth]{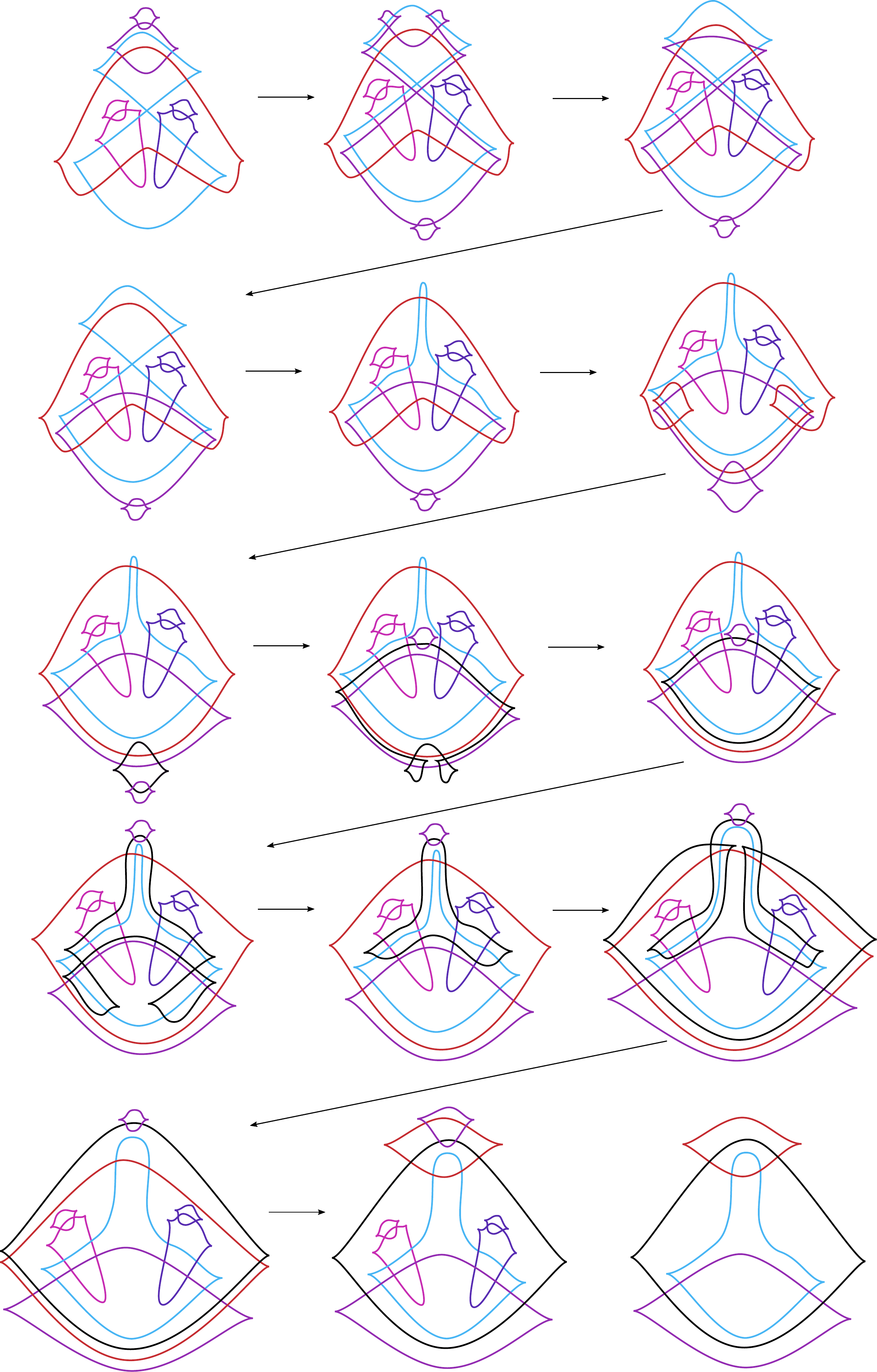}};
\node at (-5.8,8.6) {$V_{-1}$};
\node at (-5.3,9.7) {$V_{p+1}$};
\node at (-5.3,10.2) {$V_{p+2}$};
\node at (-3.3, 6.8) {$V_0$};
\node at (-2.4,9.2) {$A$};
\node at (2, 9.2) {$B$};
\node at (-1.6,6.4) {$C$};
\node at (-2.4,5) {$D$};
\node at (2, 5) {$E$};
\node at (-1.6,2.5) {$F$};
\node at (-2.4,0.9) {$G$};
\node at (2,0.9) {$H$};
\node at (-1.6,-1.8) {$I$};
\node at (-2.4,-3) {$J$};
\node at (2,-3) {$K$};
\node at (-1.6,-5.9) {$L$};
\node at (-2.1,-7.5) {$M$};
\node at (2.4,-8) {$\supseteq$};
\end{tikzpicture}
\caption{We find a Weinstein handlebody diagram of $T_{p,q,r}$ that contains as a sublink $\Lambda(\beta_{22})$ when $p,r\geq1$ and $q\geq 3$. The steps $(A), (E), (F), (I), (K), (M)$ are handle slides, and the steps $(B)-(D), (F), (H), (J), (L)$ are Legendrian Reidemeister moves.}\label{fig:Tpqr_link2}
\end{figure}

\begin{figure}
    \centering
\begin{tikzpicture}
	\node[inner sep=0] at (0,0) {\includegraphics[width=0.9\linewidth]{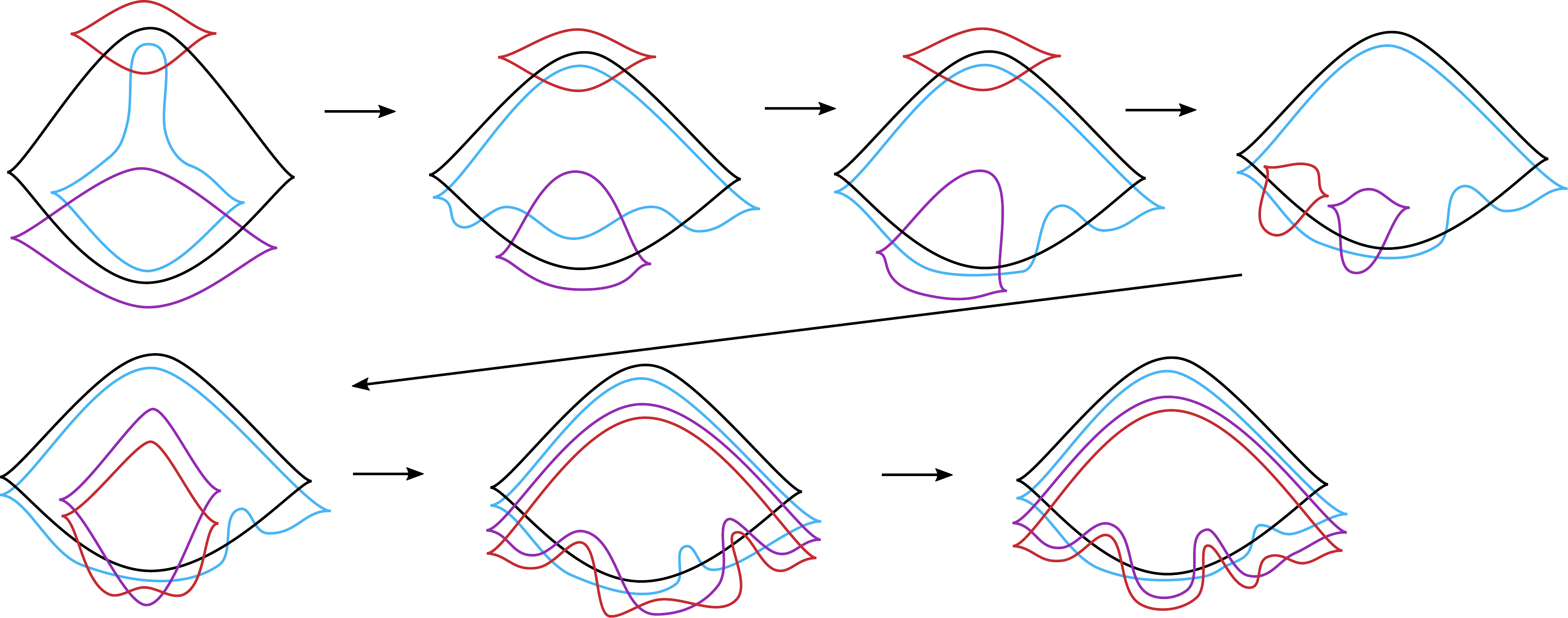}};
\node at (-3.9,2.1) {$R2$};
\node at (0.1, 2.1) {$R3$};
\node at (3.5, 2.1) {$R1-R3$};
\node at (0.1, 0.2) {$R2-R3$};
\node at (-3.3, -1.2) {$R2-R3$};
\node at (1.1, -1.2) {$R3$};
\end{tikzpicture}
\caption{A sequence of Legendrian Reidemeister moves on the Legendrian link $\Lambda(\beta_{22})$, the $(-1)$ closure of $(\sigma_2\sigma_3\sigma_1\sigma_2)^4\sigma_3^2\sigma_1^2 \in Br^+_{4}$. We indicate which types of Legendrian Reidemeister moves are used at each intermediate step.}\label{fig:sublink2r}
\end{figure}

\newtheorem*{prop:sublink}{Theorem~\ref{prop:sublink}}
\begin{prop:sublink}
For any $p,r\geq1$, and $q\geq 3$, the Weinstein $4$-manifold $T_{p,q,r}$ contains infinitely many Hamiltonian non-isotopic exact Maslov-$0$ Lagrangian tori that are all smoothly isotopic and primitive in homology.
\end{prop:sublink}

\begin{proof}
We start with the Weinstein handlebody diagram of $T_{p,q,r}$ obtained in Proposition~\ref{prop:Tpqr} and shown in Figure~\ref{fig:Tpqr1}. After performing Legendrian isotopy, Legendrian handle slides and handle cancellations, as described in Figures~\ref{fig:Tpqr1},~\ref{fig:Tpqr_link2},~and~\ref{fig:sublink2r} we find a Weinstein handlebody diagram of $T_{p,q,r}$ that contains a sublink that is Legendrian isotopic to $\Lambda(\beta_{22})$ if $p,r\geq1$, and $q\geq 3$. It follows from Corollary~\ref{thm:weinsteinsublink}, that if $p,r\geq1$, and $q\geq 3$, then $T_{p,q,r}$ contains infinitely many Hamiltonian non-isotopic Lagrangian tori that are smoothly isotopic and primitive in homology. 

We will now give a brief summary of the Legendrian isotopies, handle-slides and handle cancellations employed in Figures~\ref{fig:Tpqr1},~\ref{fig:Tpqr_link2} and~\ref{fig:sublink2r}. For simplicity of notation we do not change the notation for the attaching sphere of a $2$-handle after handle slides, although they are distinct as Legendrians. 
We first simplify the Weinstein handlebody diagram of $T_{p,q,r}$ as shown in Figure~\ref{fig:Tpqr1}: $(A)-(B)$ perform various handle-slides; $(C)$ cancel out the $1$-handles; $(D)-(E)$ perform Legendrian Reidemeister moves; $(F)$ use the fact that one can handle-slide an $N$-copy of the unknot to a chain of $N$ unknots. As soon as one cancel the $1$-handles one can check that $T_{p,q,r}$ has the correct intersection form for $p,q,r\geq 1$. Furthermore, one can also check that $T_{0,0,0}$ is Weinstein homotopy equivalent to $D^*T^2$, that is that there exists a sequence of handle-slides and Legendrian isotopies that takes the Weinstein handlebody diagram of $T_{0,0,0}$ to the standard Weinstein handlebody diagram of $D^*T^2$. We are now ready to use the Weinstein handlebody diagram of $T_{p,q,r}$ to find closed exact Lagrangian surfaces in $T_{p,q,r}$. 

From here on out we do no include the vanishing cycles that are chains of unknots as they do not play an active role in the computation. Next we simplify the Weinstein handlebody diagram of $T_{p,q,r}$ as shown in Figure~\ref{fig:Tpqr_link2}:
$(A)$ Handle slide $V_{p+1}$ under $V_0$; $(B)-(D)$ perform Legendrian Reidemeister moves; $(E)$ handle slide $V_{-1}$ over $V_{p+1}$; $(F)$ perform Legendrian Reidemeister moves; $(G)$ handle slide $V_{p+2}$ under $V_{p+1}$; $(H)$ perform Legendrian Reidemeister moves; $(I)$ handle slide $V_{p+2}$ over $V_0$; $(J)$ perform Legendrian Reidemeister moves; $(K)$ handle slide $V_{p+2}$ over $V_{-1}$; $(L)$ perform Legendrian Reidemeister moves; $(M)$ handle slide $V_{-1}$ over $V_{p+2}$. Finally, as described by Figure~\ref{fig:sublink2r} we perform a series of Legendrian Reidemeister moves on a sublink from the last diagram in Figure~\ref{fig:Tpqr_link2} to prove that it is Legendrian isotopic to $\Lambda(\beta_{22})$, the $(-1)$ closure of the braid $(\sigma_2\sigma_1\sigma_3\sigma_2)^4\sigma_1^2\sigma_3^2\in Br_4^+$.  
\end{proof}

There are also important relations between singularities that one can consider, one of which is whether a singularity is adjacent to another. A singularity $[f]$ is \textbf{adjacent} to $[g]$ if there exists an arbitrarily small perturbation $p$ such that $[f+p]=[g]$.

\begin{lemma}[Lemma $9.9$ in~\cite{keating2}]\label{lem:adjacent_embedding}
Suppose $[f]$ and $[g]$ are singularities, such that $[f]$ is adjacent to $[g]$. Then there exists an exact symplectic embedding from a non-completed Milnor fiber of $g$ into a completed Milnor fiber of $f$.
\end{lemma}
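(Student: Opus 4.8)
The plan is to convert the algebraic notion of adjacency into a concrete pair of nested hypersurface singularities and then exhibit the desired embedding as an honest inclusion of level sets, using the independence-of-choices results of \cite{Milnor, keating2} to upgrade the diffeomorphisms in sight to exact symplectomorphisms. First I would use the definition of adjacency to fix an arbitrarily small perturbation $p$ so that $F:=f+p$ has, at a point $q_0$ close to the origin, an isolated critical point whose germ is equivalent to $g$. For $\|p\|$ small all critical points of $F$ remain inside the Milnor ball $B_\delta(0)$ and no critical point escapes, so by conservation of the Milnor number one has $\mu(f)=\sum_q\mu_q(F)$, the sum running over the critical points $q$ of $F$ in $B_\delta(0)$, with $\mu_{q_0}(F)=\mu(g)$. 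I would then select data witnessing the two singularities at their respective scales: the cut-off $h_A$ and regular value $\epsilon$ for $f$ at the origin, together with a much smaller ball $B_{\delta'}(q_0)$ (and a cut-off adapted to $q_0$) for the germ $g$ of $F$ at $q_0$.

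Next I would identify the two pieces. Since the germ of $F$ at $q_0$ is equivalent to $g$, the local slice $F^{-1}(\epsilon)\cap B_{\delta'}(q_0)$ is, after recentering at $q_0$, a non-completed Milnor fiber of $g$; here the independence of the Milnor fiber from the holomorphic representative and from the auxiliary data $(A,\delta',\epsilon)$ proved in \cite{keating2} is exactly what lets me use $F$ in place of a chosen representative of $g$ and use the common value $\epsilon$. On the other hand, because $F$ is a deformation of $f$ with total Milnor number conserved inside $B_\delta(0)$, its generic fiber $F^{-1}(\epsilon)\cap B_\delta(0)$ is diffeomorphic to the Milnor fiber of $f$, and its completion is exact symplectomorphic to the completed Milnor fiber $M_f$. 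The sought map is then the tautological inclusion
$$\iota:\ F^{-1}(\epsilon)\cap B_{\delta'}(q_0)\ \hookrightarrow\ F^{-1}(\epsilon)\cap B_\delta(0),$$
composed with the exact symplectomorphism onto (the non-completed, hence a Liouville subdomain of the completed) $M_f$.

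Checking that $\iota$ is an exact symplectic embedding is then essentially automatic. Both domains carry the symplectic form $d\lambda$ obtained by restricting the standard Liouville primitive $\lambda=\tfrac{i}{4}\sum_j\bigl(z_j\,d\bar z_j-\bar z_j\,dz_j\bigr)$ on $\C^{n+1}$, so $\iota$ pulls back $d\lambda$ to $d\lambda$ and $\lambda$ to $\lambda$ on the nose; the difference of Liouville primitives is not merely exact but zero, which is stronger than what is required. Composing with the exact symplectomorphism onto $M_f$ and with the canonical inclusion of $M_f$ into its completion yields the exact symplectic embedding of the non-completed Milnor fiber of $g$ into the completed Milnor fiber of $f$.

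The hard part will be the simultaneous control of the two scales, and in particular the symplectic (not merely smooth) identification of the big generic fiber with $M_f$. One must choose the parameters in the order $\delta\gg\delta'\gg\|p\|$ and then pick a single regular value $\epsilon$ that is close enough to the critical value $c=F(q_0)$ for $F^{-1}(\epsilon)\cap B_{\delta'}(q_0)$ to be the local Milnor fiber of $g$, yet still a small regular value of $F$ on all of $B_\delta(0)$ avoiding the finitely many other critical values, so that $F^{-1}(\epsilon)\cap B_\delta(0)$ realizes the Milnor fiber of $f$; since $c\to 0$ as $\|p\|\to 0$ this is arrangeable, but it must be done carefully. The genuinely delicate input is the deformation invariance of the \emph{symplectic} Milnor fiber: that conservation of the Milnor number upgrades the smooth identification of the generic fiber of the deformation $F$ with $M_f$ to an exact symplectomorphism. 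This is where the K\"{a}hler geometry and the invariance statements of \cite{keating2} (transversality of the Liouville field of $d\lambda$ to both $\partial B_\delta(0)$ and $\partial B_{\delta'}(q_0)$, after recentering at $q_0\neq 0$ and possibly adjusting the cut-off near $q_0$) do the heavy lifting, and it is the step I expect to require the most care.
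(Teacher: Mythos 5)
The paper does not actually prove this statement: it is imported verbatim as Lemma 9.9 of \cite{keating2}, so there is no in-paper argument to compare against. Your sketch is, in substance, the standard proof and the one Keating gives: realize adjacency by a small perturbation $F=f+p$ with a critical point of type $[g]$ at some $q_0$, identify the local fiber $F^{-1}(\epsilon)\cap B_{\delta'}(q_0)$ with a non-completed Milnor fiber of $g$ via the independence-of-choices results, identify the completion of the global fiber $F^{-1}(\epsilon)\cap B_{\delta}(0)$ with the completed $M_f$ via a Liouville homotopy along $f+tp$, and observe that the tautological inclusion is exact because both pieces carry the restriction of the same global K\"ahler primitive (up to the exact correction coming from recentering at $q_0$). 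You have also correctly isolated the two genuinely delicate points --- the ordering $\delta\gg\delta'\gg\|p\|$ of scales together with the choice of a single regular value near $F(q_0)$, and the upgrade of the smooth deformation invariance of the big fiber to an exact symplectomorphism of completions --- both of which are exactly where Keating's invariance lemmas are invoked. I see no gap beyond the slightly loose phrasing at the end (the symplectomorphism of completions need not carry the non-completed subdomain onto the non-completed $M_f$, but since the target of the lemma is the completed $M_f$ this costs nothing).
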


Each of Arnold's exceptional singularities is adjacent to one of the three parabolic singularities. For $p'\geq p, q'\geq q,$ and $r'\geq r$, $T_{p',q',r'}$ is adjacent to $T_{p,q,r}$. More generally, Durfee showed that:

\begin{theorem}[Durfee~\cite{durfee}]\label{thm:durfee} Any positive modality hypersurface singularity is adjacent to a modality one hypersurface singularity.
\end{theorem}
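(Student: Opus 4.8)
The plan is to reduce the statement to Arnold's classification of simple and unimodal singularities, the key input being that the three parabolic singularities $P_8=T_{3,3,3}$, $X_9=T_{2,4,4}$, and $J_{10}=T_{2,3,6}$ are precisely the minimal non-simple singularities in the adjacency order. Each of these is unimodal, i.e.\ of modality one. I would first record the monotonicity of modality: modality is upper semicontinuous on the base of a miniversal deformation, so if $[f]$ is adjacent to $[g]$ then $\mathrm{mod}(g)\le \mathrm{mod}(f)$. Concretely, $\mathrm{mod}(f)$ is the dimension of the $\mu$-constant stratum through $f$ in the base of its miniversal deformation, and passing to a nearby fiber cannot increase this dimension, so adjacency can only preserve or lower modality.

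The modality-one case is immediate, since such a singularity is already unimodal. So it suffices to treat $f$ with $\mathrm{mod}(f)\ge 2$, and in particular $f$ is not simple. The central step is then Arnold's simplicity criterion~\cite{Arnold}: a singularity is simple if and only if it is \emph{not} adjacent to any of $P_8$, $X_9$, $J_{10}$. Taking the contrapositive, a singularity of positive modality is adjacent to at least one of the three parabolic singularities. As each parabolic singularity has modality one, this yields the required adjacency to a modality-one singularity and finishes the argument. The same mechanism explains the special cases already recorded in the text: Arnold's exceptional singularities and the hyperbolic $T_{p',q',r'}$ all lie above a parabolic singularity in the adjacency hierarchy, and decreasing the indices of a hyperbolic $T_{p',q',r'}$ reaches a parabolic $T_{p,q,r}$ with $\tfrac1p+\tfrac1q+\tfrac1r=1$.

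The main obstacle is the classification input itself. Proving that $P_8$, $X_9$, $J_{10}$ are the minimal non-simple singularities --- equivalently, the simplicity criterion --- is not formal: it requires Arnold's explicit reduction to normal forms across the corank-two and corank-three strata together with the computation of the relevant adjacencies, and it is exactly here that the finiteness built into unimodality is used. Once this is granted, the reduction of the present statement to the parabolic case is short; all of the genuine work is contained in the classification, so I would cite~\cite{Arnold} for the criterion and only assemble the modality-monotonicity and adjacency bookkeeping here, as is done in~\cite{durfee}.
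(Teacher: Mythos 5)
The paper gives no proof of this statement---it is imported verbatim from Durfee's article \cite{durfee}---so there is nothing to compare against beyond the citation; your reconstruction is the standard argument and is correct. Positive modality implies non-simple, Arnold's criterion says every non-simple germ is adjacent to one of the parabolic singularities $P_8=T_{3,3,3}$, $X_9=T_{2,4,4}$, $J_{10}=T_{2,3,6}$, and these are unimodal; note that the case split at modality one and the upper semicontinuity of modality are harmless but not actually needed, since a unimodal germ is also non-simple and hence already adjacent to a parabolic one by the same criterion.
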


\newtheorem*{cor:inftori}{Corollary~\ref{cor:inftori}}
\begin{cor:inftori}
Suppose that $M_f$ is the Milnor fiber of a positive modality isolated hypersurface singularity $f$, then $M_f$ contains infinitely many Hamiltonian non-isotopic exact Maslov-$0$ Lagrangian tori that are all smoothly isotopic.
\end{cor:inftori}
\begin{proof}
 By Theorem~\ref{prop:sublink}, the Milnor fiber of any of the three parabolic singularities, that is the $T_{p,q,r}$ singularity for $(p,q,r)\in\{(3,3,3), (2,4,4), (2,3,6)\}$, has infinitely many Hamiltonian non-isotopic exact Maslov-$0$ Lagrangian tori that are smoothly isotopic. All positive modality singularities $f$ are adjacent to a unimodular singularity by Theorem~\ref{thm:durfee}, and all unimodular singularities are adjacent to at least one of the three parabolic singularities. By Lemma~\ref{lem:adjacent_embedding}, there then exists an exact symplectic embedding of the non-completed Milnor fiber of such $T_{p,q,r}$ singularities into $M_f$. %Under an embedding such as the one given in Lemma~\ref{lem:adjacent_embedding}, vanishing cycles of $g$ get mapped to vanishing cycles for $f$ up to compactly supported Hamiltonian isotopy. Thus, the tori are primitive in homology for all 
\end{proof}

\section{Criterion for the nonvanishing of the Symplectic Homology of a Weinstein $4$-manifold.}\label{sec:criterion}

Bourgeois, Ekholm and Eliashberg in~\cite{BEE, Ekholm} gave a relation between Legendrian invariants of the critical attaching Legendrian spheres of $2$-handles and symplectic invariants of the resulting Weinstein manifold. In particular, they showed that
$$S \mathbf{H}(X_{\Lambda})=L\mathbf{H}^{H_0}(\Lambda)$$
where $L\mathbf{H}^{H_0}(\Lambda)$ is the homology of Hochschild like complex associated to the Legendrian contact homology differential graded algebra of $\Lambda$ over $\Q$. Leverson in~\cite[Corollary 1.5]{Leverson} used this result to show that if $\Lambda$ has a graded augmentation, then the symplectic homology $S\mathbf{H}(X_{\Lambda})$ does not vanish. We provide a generalization of this non-vanishing criterion, where now for Legendrian links $\Lambda$ that have at least one sublink with a graded augmentation or a graded representation, $S\mathbf{H}(X_{\Lambda})\neq 0$. See Figure~\ref{fig:examples_sh} for three such Legendrians. The first two Legendrian links shown on the left of Figure~\ref{fig:examples_sh} do not have a graded augmentation but have a sublink (the max-tb unknot link component) that does. The Legendrian knot $m(10_{132})$ whose front is shown on the right of Figure~\ref{fig:examples_sh} has a graded representation but no graded augmentations or finite graded representations~\cite[Section 2.2]{Sivek} (the proof is written over $\Z_2$ but also holds with minor modifications over $\Z$). 

\newtheorem*{thm:criterion}{Theorem~\ref{thm:criterion}}
\begin{thm:criterion}
Let $X_{\Lambda}$ be the Weinstein $4$-manifold resulting from attaching $2$-handles along a Legendrian link $\Lambda \subset (\#^m(S^1\times S^2), \xi_{std})$ with $n$ link components. If there is any sublink $\Lambda'$ with $l$ link components for $l\leq n$, such that its differential graded algebra has a graded representation 
$$\rho: (\mathcal{A}(\Lambda';\Z[t_1^{\pm 1} \ldots, t_l^{\pm 1}]), \partial) \rightarrow End(V)$$
where $V$ is a vector space over $\Q$ and $\rho(t_k)=-Id$ for $k=1,\ldots, l$, then $S\mathbf{H}(X_{\Lambda})\neq0$.
\end{thm:criterion}

\begin{figure}[t!]
    \centering
\begin{tikzpicture}
	\node[inner sep=0] at (0,0) {\includegraphics[width=0.5\linewidth]{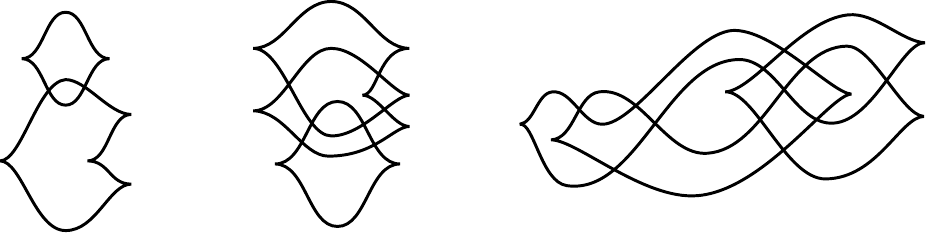}};
\end{tikzpicture}
\caption{Weinstein handlebody diagrams of Weinstein $4$-manifolds with non-vanishing symplectic cohomology. The two Legendrian links on the left have one link component that does not have a graded augmentation and one link component (the max-tb unknot) that does have a graded augmentation. The $m(10_{132})$ Legendrian knot on the right has a graded representation but no graded augmentations.}\label{fig:examples_sh}
\end{figure}

There is a fair amount of work that goes into defining a chain complex and differential of $L\mathbf{H}^{H_0}(\Lambda)$ in~\cite{BEE}. For the readers convenience we recall the relevant definitions and notation, with the slight modification that chain complexes are denoted with a $C$ instead of $CH$.

\textbf{The generators of $LC^{H_0}(\Lambda)$:} Consider the Legendrian contact homology differential graded algebra $(\mathcal{A}(\Lambda; \Z), \partial)$, with the null cobordant spin structure on each $\Lambda^{(i)}$. For the purposes of this section, we assume the dg-algebra also had generators $e_i$, where $e_i$ is an idempotent element associated to the link component $\Lambda^{(i)}$ and which we refer to as an empty Reeb chord. Moreover, we also require the empty Reeb chords to satisfy the following relations:
$$ e_1+\cdots +e_n=1,~~ e_i\cdot e_j =\delta_{ij}$$
and
$$
e_i\cdot c =\begin{cases}
c & \text{If}~c\in \mathcal{R}_{ij}(\Lambda)~\text{for some}~j\\
0 & \text{otherwise}
\end{cases}~~c\cdot e_j=\begin{cases}
c & \text{If}~c\in \mathcal{R}_{ij}(\Lambda)~\text{for some}~i\\
0 & \text{otherwise}
\end{cases}
$$
The subalgebra $LCO(\Lambda)\subset \mathcal{A}(\Lambda)$ is generated by cyclically composable monomials of Reeb chords, while $LCO^{+}(\Lambda):=LCO(\Lambda)/ \langle e_1,\ldots,e_n\rangle$ is the subalgebra generated by non-trivial cyclically composable monomials of non-empty Reeb chords. Let $\widecheck{LCO}^{+}(\Lambda):=LCO^{+}(\Lambda)$ and $\widehat{LCO}^{+}(\Lambda):=LCO^{+}(\Lambda)[1]$. For any element $w=c_1\cdots c_j\in LHO^{+}(\Lambda)$, write the corresponding elements $\check{w} \in \widecheck{LCO}^{+}(\Lambda)$ and $\hat{w}\in \widehat{LCO}^{+}(\Lambda)$ as follows $\check{w}=\check{c_1}\cdots c_n$ and $\hat{w}=\hat{c_1}\cdots c_n$. 
The chain complex is then
$$LC^{H_0}(\Lambda)=\widecheck{LCO}^{+}(\Lambda)\oplus \widehat{LCO}^{+}(\Lambda)\oplus\Q\langle \tau_1, \dots, \tau_n\rangle,$$
 where each $\tau_i$ has grading $0$. Each $\tau_i$ is in bijective correspondence with each $e_i$ and thus with each Legendrian link component $\Lambda^{(i)}$.

\textbf{The differential $d_{H_0}$:} Given an element $(\check{w},\hat{v}, \sum_{i=1}^n m_i \tau_i)$ in $LC^{H_0}(\Lambda)$, where $\hat{v}=\hat{c_1}\cdots c_k$ and $m_i\in \Q$, the differential is given by
$$
d_{H_0}(\check{w},\hat{v},\sum_{i=1}^n m_i \tau_i):=(\check{d}_{LCO^{+}}(\check{w})+\check{c_1}\cdots c_k -c_1 \cdots \check{c_k}, \hat{d}_{LCO^{+}}(\hat{v}), \delta_{H_0}(\check{w}+\hat{v})).
$$
The differentials $\check{d}_{LCO^{+}}$ and $\hat{d}_{LCO^{+}}$ are the differential of $\mathcal{A}(\Lambda)$ restricted to $\widecheck{LCO}^{+}(\Lambda)$ and $\widehat{LCO}^{+}(\Lambda)$ respectively. The differential $\delta_{H_0}$ is defined as follows:
\begin{itemize}\item if $c$ is a non-empty pure Reeb chord beginning and ending on $\Lambda^{(i)}$, then
$\delta_{H_0}(\check{c})=n_{c_i}\tau_i$, where $n_{c_i}$ is the count of the zero dimensional moduli space of holomorphic disks asymptotic at $+\infty$ to $c$; 
\item if $w$ is a nonlinear monomial, then $\delta_{H_0}(\check{w})=0$;
\item for any element $\hat{v}$, $\delta_{H_0}(\hat{v}):=0$.
\end{itemize}

\begin{proof}

Our goal is to construct a representation $\tilde{\rho}: (LC^{H_0}(\Lambda), d_{H_0}) \rightarrow (End(V^{\oplus n}),0)$ such that $\tilde{\rho} \circ d_{H_0}=0$. We use such a representation $\tilde{\rho}$ to show that $d_{H_0}(\tau_k)=0$ and $\tau_k \notin Im(d_{H_0})$ for $k=1, \ldots, n$. We can then conclude that $LC\mathbf{H}^{H_0}(\Lambda)=S\mathbf{H}(X_{\Lambda})$ is nonzero. We will label the link components in $\Lambda=\Lambda^{(1)}\cup \cdots \cup \Lambda^{(n)}$ so that $\Lambda'$ is given by the first $l$ link components, $\Lambda^{(1)} \cup \cdots \cup \Lambda^{(l)}$.

First define a graded map $\rho': \mathcal{A}(\Lambda)\oplus\Q\langle \tau_1, \ldots, \tau_n\rangle\rightarrow End(V^{\oplus n})$ as follows. 
For $1\leq k \leq l$
$$\rho'(t_k)_{ij}=\begin{cases}
Id & i=j=k\\
0& \text{otherwise}.\\
\end{cases}$$ 
For $k>l$, $$\rho'(\tau_k)_{ij}=0.$$\\
For $1\leq k \leq l$
$$\rho'(\tau_k)_{ij}=\begin{cases}
Id & i=j=k\\
0& \text{otherwise}.
\end{cases}$$ 
If $c\in \mathcal{R}_{kr}(\Lambda)$ is a Reeb chord beginning on $\Lambda_k$ and ending on $\Lambda_r$ and $1\leq k,r\leq l$, then
$$\rho'(c)_{ij}=\begin{cases} 
     \rho(c) & i=k,~j=r\\
     0 & \text{otherwise}.
   \end{cases}$$
If $c\in \mathcal{R}_{kr}(\Lambda)$ but $l<k$ or $l<r$, then $$\rho'(c)_{ij}=0.$$
Thus, for any element $(\check{w},\hat{v}, \sum_{i=1}^n m_i \tau_i)\in LC^{H_0}(\Lambda)$, where $\hat{v}=\hat{c_1}\cdots c_k$,and $m_i\in \Q$, define
$$\tilde{\rho}(\check{w},\hat{v}, \sum_{i=1}^n m_i \tau_i)=\rho'(w)+\rho'(m_i \tau_i).$$
We need to check that $\tilde{\rho} \circ d_{H_0}=0$:
\begin{align*}
\tilde{\rho} (d_{H_0}(\check{w},\hat{v}, \sum_{i=1}^n m_i\tau_i))&=\tilde{\rho}(dw+\check{c_1}\cdots c_j -c_1 \cdots \check{c_j}, \hat{d}_{LCO^{+}}(\hat{v}), \delta_{H_0}(\check{w}+\hat{v}))\\
&=\rho'(\check{d}_{LCO^{+}}(\check{w})+c_1\cdots c_k -c_1 \cdots c_k)+\rho'(\delta_{H_0}(\check{w}+\hat{v}))\\
&=\rho'(\check{d}_{LCO^{+}}(\check{w})+\delta_{H_0}(\check{w})).
\end{align*}
By definition of $\delta_{H_0}$ there are two cases to consider. If $w$ is a nonlinear monomial that begins and ends on the link component $\Lambda_k$ for $1\leq k\leq n$, then 
$$\tilde{\rho} (d_{H_0}(\check{w},\hat{v}, \sum_{i=1}^n m_i \tau_i))=\rho'(\check{d}_{LCO^{+}}(\check{w})+0),$$
and by definition of $\rho'$
$$\rho'(\check{d}_{LCO^{+}}(\check{w}))_{ij}=\begin{cases}
\rho(dw)& i=j=k\\
0&\text{otherwise}
\end{cases}$$
since $\check{d}_{LCO^{+}}=d|_{\widecheck{LCO}^{+}}$. If $w=c$ is a Reeb chord beginning and ending on the link component $\Lambda_k$ for $1\leq k\leq n$, and because we can identify $\tau_i$ with the empty Reeb word $e_i\in LCO(\Lambda)$, we can conclude that
$$\tilde{\rho} (d_{H_0}(\check{w},\hat{v}, \sum_{i=1}^n m_i \tau_i))=\rho'(\check{d}_{LCO^{+}}(\check{c})+n_{c_k}\tau_k)=\rho'(d_{LCO}(c)).$$
Then,
$$\rho'(d_{LCO}(c))_{ij}=\begin{cases}
\rho(d(c))& i=j=k\\
0&\text{otherwise}.
\end{cases}$$
Since we started with a representation $\rho$ such that $\rho \circ \partial =0$, we can conclude that $\tilde{\rho}\circ d_{H_0}=0.$ Finally, we observe that $\tau_k$ for $k=1, \ldots, n$ is a cycle because $d_{H_0}(0,0,\tau_k)=(0,0,0)$. Moreover, $\tilde{\rho}(\tau_k)=\rho(t_k)=-Id\neq 0$, and $\tilde{\rho} \circ d_{H_0}=0$, so we can conclude that $\tau_k\notin Im(d_{H_0})$ for $k=1,\ldots, n$.
\end{proof}

%Flexible Weinstein manifolds abide by an h-principle~\cite{Cieliebak_eliashberg} and it is often difficult to determine whether a Weinstein manifold is flexible or not. A Weinstein manifold is said to be flexible if the critical handles areattached along loose Legendrian submanifolds. Since flexible Weinstein manifolds have vanishing symplectic homology, we can then immediately conclude the following.

\newcommand{\noopsort}[1]{} \newcommand{\printfirst}[2]{#1}
  \newcommand{\singleletter}[1]{#1} \newcommand{\switchargs}[2]{#2#1}

\end{document}